\newtheorem{thm}{Theorem}[section]
\newtheorem{prop}[thm]{Proposition}
\newtheorem{lemma}[thm]{Lemma}
\newtheorem{cor}[thm]{Corollary}
\newtheorem{exam}[thm]{Example}
\theoremstyle{remark}
\newtheorem{remark}[thm]{Remark}
\newcommand{\id}{{\rm{id}}}
\newcommand{\Ad}{{\rm{Ad}}}
\newcommand{\BN}{\mathbf N}
\newcommand{\BC}{\mathbf C}
\newcommand{\BK}{\mathbf K}
\newcommand{\BB}{\mathbf B}
\newcommand{\la}{\langle}
\newcommand{\ra}{\rangle}
\newcommand{\CalL}{{\mathcal{L}}}
\newcommand{\CalE}{{\mathcal{E}}}
\newcommand{\CalH}{{\mathcal{H}}}
\newcommand{\CalK}{{\mathcal{K}}}
\newcommand{\CallN}{{\mathcal{N}}}
\newtheorem{Def}{Definition}[section]
\title{Strong Morita equivalence for completely positive linear maps on $C^*$-algebras}
\author{Kazunori Kodaka}
\address{Department of Mathematical Sciences, Faculty of Science, Ryukyu University, 
\endgraf
Nishihara-cho, Okinawa, 903-0213, Japan}
\address{\sl{E-mail address}: \rm{kodaka@math.u-ryukyu.ac.jp}}
\keywords{completely positive linear maps, inclusions of $C^*$-algebras, conditional expectations,
strong Morita equivalence}
\subjclass[2010]{46L05}
\begin{document}
\begin{abstract}
We will introduce the notion of strong Morita equivalence for completely positive linear maps
and study its basic properties. 
Also, we will discuss the relation between strong Morita equivalence for
bounded $C^*$-bimodule linear maps and strong Morita equivalence for completely positive linear maps.
Furthermore, we will show that if two unital $C^*$-algebras are strongly Morita equivalent, then
there is a $1-1$ correspondence between the two sets of all strong Morita equivalence classes
of completely positive linear maps on the two unital $C^*$-algebras and we will show that the
corresponding two classes of the completely positive linear maps are also strongly Morita equivalent.
\end{abstract}

\maketitle

\section{Introduction}\label{sec:intro} In the previous paper \cite {Kodaka:Picard3}, we introduced the notions of
strong Morita equivalence for bounded $C^*$-bimodule linear maps and the Picard group of
a bounded $C^*$-bimodule linear map.
\par
In this paper, we will introduce the notion of strong Morita equivalence for completely positive
linear maps applying its minimal Stinespring representation and \cite [Definition 2.1]{ER:multiplier} introduced
by Echterhoff and Raeburn. 
\par
To do this, following \cite [Definition 2.1]{ER:multiplier}, in Section \ref{sec:MR},
we will introduce the notion of strong Morita
equivalence for non-degenerate representations of $C^*$-algebras and we will discuss its basic properties.
\par
In Section \ref{sec:DP}, we will define strong Morita equivalence for completely positive linear maps and
discuss its basic properties.
\par
In Section \ref{sec:Rel}, we will consider inclusions of $C^*$algebras $A\subset C$ with $\overline{AC}=C$ and
conditional expectations. Conditional expectations are regarded as bounded $C^*$-bimodule linear
maps. Also, they are regarded as completely positive linear maps. We will discuss the relation between
strong Morita equivalence for bounded $C^*$-bimodule linear maps and strong Morita equivalence
for completely positive linear maps.
\par
In Section \ref{sec:Co}, we will consider two unital $C^*$-algebras $A$ and $B$, which are strongly Morita
equivalent. We will construct a $1-1$ correspondence between the set of all strong Morita equivalence classes of
completely positive linear maps from $A$ to the $C^*$-algebra of all bounded linear operators on a
Hilbert space and the set of all strong Morita equivalence classes of completely positive linear maps from
$B$ to the $C^*$-algebra of all bounded linear operators on a Hilbert space.

\section{Preliminaries}\label{sec:pre} Let $A$ be a $C^*$-algebra. We denote by $\id_A$
the identity map on $A$. When $A$ is unital, we denote by $1_A$ the unit element in $A$.
We simply them by $\id$ and $1$, respectively if no confusion arises.
\par
For each $n\in\BN$, let $M_n (A)$ be the $n\times n$-matrix algebra over $A$. We identify $M_n (A)$ with
$A\otimes M_n (\BC)$. Let $I_n $ be the unit element in $M_n (\BC)$. For each $a\in M_n (A)$, we denote
by $a_{ij}$, the $i\times j$-entry of the matrix $a$.
\par
Let $M(A)$ be the multiplier $C^*$-algebra of $A$ and for any automorphism $\alpha$
of $A$, let $\underline{\alpha}$ be the automorphism of $M(A)$ extending $\alpha$ to $M(A)$, which is defined in
Jensen and Thomsen \cite [Corollary 1.1.15]{JT:KK}.
\par
Let $A$ and $B$ be $C^*$-algebras and $X$ an $A-B$-equivalence
bimodule. We denote its left $A$-action and right $B$-action on $X$ by $a\cdot x$ and $x\cdot b$ for any
$a\in A$, $b\in B$, $x\in X$, respectively. Let $\widetilde{X}$ be the dual $B-A$-equivalence
bimodule of $X$ and let $\widetilde{x}$ denote the element in $\widetilde{X}$ associated to an element $x\in X$.
\par
For Hilbert spaces $\CalH$ and $\CalK$, let $\BB(\CalK, \CalH)$ be the space of all bounded linear operators
from $\CalK$ to $\CalH$ and if $\CalH=\CalK$, we denote $\BB(\CalH, \CalH)$ by $\BB(\CalH)$.
For a Hilbert space $\CalH$, we denote by $\la \cdot , \cdot \ra_{\CalH}$ the inner product of $\CalH$.
\par
Let $\BK$ be the $C^*$-algebra of all compact operators on a countably infinite dimensional Hilbert space
$\CalH_0$. Let $\{\epsilon_i \}_{i=1}^{\infty}$ be an orthogonal basis of $\CalH_0$ and $\{e_{ij}\}_{i,j=1}^{\infty}$
the system of matrix units of $\BK$ with respect to $\{\epsilon_i \}_{i=1}^{\infty}$.

\section{Definition and properties of Strong Morita equivalence
for non-degenerate representations of $C^*$-algebras}\label{sec:MR}
Following \cite[Definition 2.1]{ER:multiplier}, we give the definition of a representation of
equivalence bimodule.
\begin{Def}\label{Def:MR1} Let $A$ and $B$ be $C^*$-algebras. A
\sl
representation
\rm
of an $A-B$-equivalence bimodule $X$ on the pair of Hilbert spaces $(\CalH, \CalK)$ is a triple
$(\pi_A, \pi_X, \pi_B)$ consisting of non-degenerate representations $\pi_A : A\to\BB(\CalH)$, 
$\pi_B : B\to\BB(\CalK)$ and a linear map $\pi_X : X \to\BB(\CalK, \CalH)$ satisfying the following:
for any $a\in A$, $b\in B$, $x, y\in X$,
\newline
(1) $\pi_X (x)\pi_X(y)^* =\pi_A ({}_A \la x, y \ra)$,
\newline
(2) $\pi_X (x)^* \pi_X(y) =\pi_B ( \la x, y \ra_B )$,
\newline
(3) $\pi_X (a\cdot x\cdot b)=\pi_A (a)\pi_X (x)\pi_B (b)$.
\end{Def}

Let $(\pi_A , \CalH)$ and $(\pi_B , \CalK)$ be non-degenerate representations of $C^*$-algebras
$A$ and $B$, respectively.

\begin{Def}\label{Def:MR2} The non-degenerate representation $(\pi_A, \CalH)$ of $A$ is
\sl
strongly Morita equivalent
\rm
to the non-degenerate representation $(\pi_B, \CalK)$ of $B$ if there are an $A-B$-equivalence bimodule
$X$ and a linear map $\pi_X : X \to\BB(\CalK, \CalH)$ such that
$(\pi_A , \pi_X , \pi_B)$ is a representation of $X$
on the pair of Hilbert spaces $(\CalH, \CalK)$.
\end{Def}

\begin{lemma}\label{lem:MR3} With the above notation, strong Morita equivalence for non-degenerate
representations of $C^*$-algebras is equivalence relation.
\end{lemma}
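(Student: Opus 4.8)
The plan is to verify the three defining properties of an equivalence relation—reflexivity, symmetry, and transitivity—by producing, in each case, the required equivalence bimodule together with a representation of it in the sense of Definition \ref{Def:MR1}.

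For reflexivity, I would show that any non-degenerate representation $(\pi_A,\CalH)$ is strongly Morita equivalent to itself. The natural candidate for the equivalence bimodule is $A$ itself, viewed as the standard $A-A$-equivalence bimodule with inner products ${}_A\la a,b\ra=ab^*$ and $\la a,b\ra_A=a^*b$. I would then define $\pi_X:A\to\BB(\CalH)$ to be $\pi_A$ itself, and check conditions (1)--(3) of Definition \ref{Def:MR1}. Condition (1) reads $\pi_A(a)\pi_A(b)^*=\pi_A(ab^*)$, condition (2) reads $\pi_A(a)^*\pi_A(b)=\pi_A(a^*b)$, and condition (3) is $\pi_A(a\cdot x\cdot b)=\pi_A(a)\pi_A(x)\pi_A(b)$; all three follow immediately since $\pi_A$ is a $*$-homomorphism.

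For symmetry, suppose $(\pi_A,\CalH)$ is strongly Morita equivalent to $(\pi_B,\CalK)$ via an $A-B$-equivalence bimodule $X$ and a linear map $\pi_X$. I would pass to the dual $B-A$-equivalence bimodule $\widetilde{X}$ and define $\pi_{\widetilde{X}}:\widetilde{X}\to\BB(\CalH,\CalK)$ by $\pi_{\widetilde{X}}(\widetilde{x})=\pi_X(x)^*$, which is well-defined and linear because $x\mapsto\widetilde{x}$ is a conjugate-linear bijection and the adjoint is conjugate-linear. Then I would verify that $(\pi_B,\pi_{\widetilde{X}},\pi_A)$ is a representation of $\widetilde{X}$ on $(\CalK,\CalH)$. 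Using the standard identities ${}_B\la\widetilde{x},\widetilde{y}\ra=\la x,y\ra_B$ and $\la\widetilde{x},\widetilde{y}\ra_A={}_A\la x,y\ra$ for the dual bimodule, conditions (1) and (2) for $\pi_{\widetilde{X}}$ translate directly into conditions (2) and (1) for $\pi_X$ after taking adjoints, and condition (3) follows from the bimodule action on $\widetilde{X}$ given by $b\cdot\widetilde{x}\cdot a=\widetilde{a^*\cdot x\cdot b^*}$ together with condition (3) for $\pi_X$.

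For transitivity, suppose $(\pi_A,\CalH)$ is equivalent to $(\pi_B,\CalK)$ via $(X,\pi_X)$ and $(\pi_B,\CalK)$ is equivalent to $(\pi_C,\CalL)$ via $(Y,\pi_Y)$. The natural bimodule is the interior tensor product $X\otimes_B Y$, which is an $A-C$-equivalence bimodule, and I would define $\pi_{X\otimes Y}(x\otimes y)=\pi_X(x)\pi_Y(y)$. The main obstacle, and the step requiring the most care, is to show that this formula gives a well-defined linear map on $X\otimes_B Y$; this amounts to checking compatibility over the balancing relation $x\cdot b\otimes y=x\otimes b\cdot y$, which follows from condition (3) applied to both $\pi_X$ and $\pi_Y$, and then verifying that it extends boundedly to the completed tensor product by controlling the norm using the inner-product identities. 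Once well-definedness is established, conditions (1) and (2) follow from the formulas ${}_A\la x\otimes y, x'\otimes y'\ra={}_A\la x\cdot{}_B\la y,y'\ra, x'\ra$ and $\la x\otimes y,x'\otimes y'\ra_C=\la y,\la x,x'\ra_B\cdot y'\ra_C$ for the tensor product, combined with conditions (1) and (2) for $\pi_X$ and $\pi_Y$ and the defining properties of $\pi_B$; condition (3) is then routine. Assembling the three cases completes the proof.
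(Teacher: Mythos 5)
Your proposal is correct and follows essentially the same route as the paper's proof: the trivial bimodule $A$ for reflexivity, the dual bimodule $\widetilde{X}$ with $\pi_X(x)^*$ for symmetry, and the interior tensor product $X\otimes_B Y$ with $\pi_X(x)\pi_Y(y)$ for transitivity. The only difference is that you spell out the well-definedness check for the tensor product map, which the paper dismisses as routine computation.
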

\begin{proof} Let $(\pi_A , \CalH)$ be a non-degenerate representation of a $C^*$-algebra $A$.
We regard $A$ as the trivial $A-A$-equivalence bimodule in the usual way. We denote it by
$X_0$. Let $\pi_{X_0}$ be the linear map from $X_0$ to $\BB(\CalH)$ defined by $\pi_{X_0} (x)=\pi_A (x)$
for any $x\in X_0$. Then we can see that $\pi_{X_0}$ satisfies Conditions (1)-(3) in Definition \ref{Def:MR1}.
Hence $(\pi_A , \CalH)$ is strongly Morita equivalent to itself.
\par
Let $(\pi_B , \CalK)$ be a non-degenerate
representation of a $C^*$-algebra $B$ and we suppose that $(\pi_A , \CalH)$ is strongly Morita
equivalent to $(\pi_B , \CalK)$. Then there are an $A-B$-equivalence bimodule $X$ and a linear map
$\pi_X$ from $X$ to $\BB(\CalK, \CalH)$ satisfying Conditions (1)-(3) in Definition \ref{Def:MR1}.
Let $\widetilde{\pi_X}$ be the linear map from $\widetilde{X}$ to $\BB(\CalH, \CalK)$ defined by
$\widetilde{\pi_X}(\widetilde{x})=\pi_X (x)^*$ for any $x\in X$. Then we see that $\widetilde{\pi_X}$ satifies
Conditions (1)-(3) in Definition \ref{Def:MR1}. Thus, $(\pi_B , \CalK)$ is strongly Morita equivalent to
$(\pi_A, \CalH)$.
\par
Let $(\pi_C, \CalL)$ be a non-degenerate representation of a $C^*$-algebra $C$.
We suppose that $(\pi_A, \CalH)$ is strongly Morita equivalent to $(\pi_B , \CalK)$ with respect to
an $A-B$-equivalence bimodule $X$ and a linear map $\pi_X : X \to \BB(\CalK, \CalH)$ such that
$(\pi_A , \pi_X , \pi_B)$ is a representation of $X$ on the pair of Hilbert spaces $(\CalH, \CalK)$.
Also, we suppose that $(\pi_B , \CalK)$ is strongly Morita equivalent to $(\pi_C , \CalL)$ with respect to
a $B-C$-equivalence bimodule $Y$ and a linear map $\pi_Y : Y \to \BB(\CalL , \CalK)$ such that
$(\pi_B , \pi_Y , \pi_C)$ is a representation of $Y$ on the pair of Hilbert spaces $(\CalK, \CalL)$.
Let $\pi_{X\otimes_B Y}$ be the linear map from $X\otimes_B Y$ to $\BB(\CalL , \CalH)$ defined by
$$
\pi_{X\otimes Y}(x\otimes y)=\pi_X (x)\pi_Y (y)
$$
for any $x\in X$, $y\in Y$. Then by routine computations, we can see that $\pi_{X\otimes_B Y}$ satisfies
Conditions (1)-(3) in Definition \ref{Def:MR1}. Thus $(\pi_A , \pi_{X\otimes_B Y} , \pi_C)$ is a representation of
$X\otimes Y$ on a pair of Hilbert spaces $(\CalH , \CalL)$ and $(\pi_A , \CalH)$ is strongly Morita equivalent to
$(\pi_C , \CalL)$. Therefore, we obtain the conclusion.
\end{proof}

\begin{lemma}\label{lem:MR3-2} Let $(\pi_1 , \CalH_1)$ and $(\pi_2 , \CalH_2 )$ be non-degenerate representations
of a $C^*$-algebra $A$. If $(\pi_1 , \CalH_1)$ and $(\pi_2 , \CalH_2 )$ are unitarily equivalent, they are strongly
Morita equivalent.
\end{lemma}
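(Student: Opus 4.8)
The plan is to exhibit the trivial $A$-$A$-equivalence bimodule together with a map $\pi_X$ built from the intertwining unitary. Since $(\pi_1, \CalH_1)$ and $(\pi_2, \CalH_2)$ are unitarily equivalent, I would fix a unitary $U : \CalH_1 \to \CalH_2$ with $\pi_2(a) = U\pi_1(a)U^*$ for every $a\in A$. As in the proof of Lemma \ref{lem:MR3}, I regard $A$ as the trivial $A$-$A$-equivalence bimodule $X_0$, so that $a\cdot x = ax$, $x\cdot b = xb$, ${}_A\la x, y\ra = xy^*$ and $\la x, y\ra_A = x^*y$ for $x,y\in X_0$ and $a,b\in A$.

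Next I would define the linear map $\pi_{X_0} : X_0 \to \BB(\CalH_2, \CalH_1)$ by $\pi_{X_0}(x) = \pi_1(x)U^*$. Since $U^* : \CalH_2\to\CalH_1$ and $\pi_1(x) : \CalH_1\to\CalH_1$, the operator $\pi_{X_0}(x)$ indeed belongs to $\BB(\CalH_2, \CalH_1)$, and linearity is immediate. The claim to be verified is that the triple $(\pi_1, \pi_{X_0}, \pi_2)$ is a representation of $X_0$ on the pair $(\CalH_1, \CalH_2)$ in the sense of Definition \ref{Def:MR1}; this will exhibit the required equivalence bimodule and map, giving strong Morita equivalence by Definition \ref{Def:MR2}.

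It then remains to check Conditions (1)--(3). For (1), using $U^*U = \id$ and that $\pi_1$ is a $*$-homomorphism, $\pi_{X_0}(x)\pi_{X_0}(y)^* = \pi_1(x)U^*U\pi_1(y)^* = \pi_1(xy^*) = \pi_1({}_A\la x, y\ra)$. For (2), using $\pi_2(a) = U\pi_1(a)U^*$, $\pi_{X_0}(x)^*\pi_{X_0}(y) = U\pi_1(x^*y)U^* = \pi_2(x^*y) = \pi_2(\la x, y\ra_A)$. For (3), since $U^*\pi_2(b) = \pi_1(b)U^*$, one computes $\pi_1(a)\pi_{X_0}(x)\pi_2(b) = \pi_1(a)\pi_1(x)\pi_1(b)U^* = \pi_1(axb)U^* = \pi_{X_0}(a\cdot x\cdot b)$.

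The argument is essentially bookkeeping, and I expect no genuine obstacle. The only point requiring care is the direction conventions: choosing $U : \CalH_1\to\CalH_2$ so that $\pi_{X_0}(x) = \pi_1(x)U^*$ lands in $\BB(\CalH_2, \CalH_1)$, and so that the relation $\pi_2(a) = U\pi_1(a)U^*$ produces exactly the $\pi_2$-term needed in (2) and the correct intertwining in (3).
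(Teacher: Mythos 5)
Your proposal is correct and follows exactly the paper's argument: both use the trivial $A$-$A$-equivalence bimodule $X_0$ and define $\pi_{X_0}(x)=\pi_1(x)U^*$ with the intertwining unitary $U:\CalH_1\to\CalH_2$, the only difference being that you spell out the verification of Conditions (1)--(3) that the paper leaves as ``easy computations.''
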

\begin{proof} Since $(\pi_1 , \CalH_1)$ and $(\pi_2 , \CalH_2 )$ are unitarily equivalent,
there is an isometry $u$ from $\CalH_1$ onto $\CalH_2$ such that $\pi_2 =\Ad(u)\circ\pi_1$.
Let $X_0$ be the trivial $A-A$-equivalence bimodule defined in the proof of
Lemma \ref{lem:MR3}. Let $\pi_{X_0}$ be the linear map from $X_0$ to $\BB(\CalH_2 , \CalH_1 )$ defined by
$\pi_{X_0}(x)=\pi_1 (x)u^*$. Then by easy computations, we can see that $(\pi_1 , \pi_{X_0} , \pi_2 )$ is
a representation of $X_0$ on the pair of Hilbert spaces $(\CalH_1 , \CalH_2 )$. Hence
$(\pi_1 , \CalH_1)$ and $(\pi_2 , \CalH_2 )$ are strongly Morita equivalent.
\end{proof}

Let $(\pi_A , \CalH)$ and $(\pi_B , \CalK )$ be non-degenerate representations of $C^*$-algebras $A$
and $B$, respectively. We suppose that $(\pi_A , \CalH)$ and $(\pi_B , \CalK )$ are strongly Morita
equivalent, that is, there are an $A-B$-equivalence bimodule $X$ and a linear map $\pi_X$ from $X$
to $\BB(\CalK, \CalH)$ such that $(\pi_A , \pi_X , \pi_B)$ is representation of $X$ on the pair of Hilbert spaces
$(\CalH, \CalK)$.
\par
Let $L_X $ be the linking $C^*$-algebra for $X$, that is,
$$
L_X =\{\begin{bmatrix} a & x \\
\widetilde{y} & b \end{bmatrix} \, | \, a\in A , \quad
b\in B, \quad x, y\in X \} .
$$
Let $\rho$ be the representation of $L_X$ on $\CalH\oplus\CalK$ defined by
$$
\rho(\begin{bmatrix} a & x \\
\widetilde{y} & b \end{bmatrix} )=\begin{bmatrix} \pi_A (a) & \pi_X (x) \\
\pi_X (y)^* & \pi_B (b) \end{bmatrix} 
$$
for any $a\in A$, $b\in B$, $x, y\in X$. Since $(\pi_A , \CalH)$ and $(\pi_B , \CalK)$
are non-degenerate, so is $(\rho, \CalH\oplus\CalK)$ by \cite [\S 2, Remark (3)]{ER:multiplier}.
Let $p=\begin{bmatrix} 1_{M(A)} & 0 \\
0 & 0 \end{bmatrix}$, $q=\begin{bmatrix} 0 & 0 \\
0 & 1_{M(B)} \end{bmatrix}$. Then $p$ and $q$ are projections in $M(L_X )$ with
$$
\overline{L_X pL_X}=L_X , \quad \overline{L_X q L_X}=L_X , \quad
pL_X p\cong A , \quad qL_X q \cong B
$$
as $C^*$-algebras, respectively. We identify $pL_X p$ and $qL_X q$ with $A$ and $B$, respectively.

\begin{lemma}\label{lem:MR4} With the above notation, $(\rho, \CalH\oplus\CalK)$ is
strongly Morita equivalent to $(\pi_A, \CalH)$ and $(\pi_B , \CalK)$.
\end{lemma}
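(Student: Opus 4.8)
The plan is to realize both strong Morita equivalences explicitly through the corner modules of the linking algebra $L_X$, exploiting that $p$ and $q$ are full projections in $M(L_X)$. I would treat the equivalence with $(\pi_A, \CalH)$ in detail; the equivalence with $(\pi_B, \CalK)$ then follows by the symmetric argument using $q$ in place of $p$. First I would take $Z=L_X p$ and regard it as an $L_X - A$-equivalence bimodule. Since $\overline{L_X p L_X}=L_X$, the projection $p$ is full, so $L_X p$ --- with the left $L_X$-action and the right $A=pL_Xp$-action given by multiplication, and the inner products ${}_{L_X}\la \xi, \eta\ra=\xi\eta^*$ and $\la\xi,\eta\ra_A=\xi^*\eta$ --- is an $L_X - A$-equivalence bimodule in the standard way. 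Concretely, $Z$ consists of the matrices $\begin{bmatrix} a & 0 \\ \widetilde{y} & 0\end{bmatrix}$ with $a\in A$, $y\in X$.

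Next, since $(\rho, \CalH\oplus\CalK)$ is non-degenerate, $\rho$ extends to a representation of $M(L_X)$, which I still denote by $\rho$. The key observation is that $\rho(p)$ is the orthogonal projection of $\CalH\oplus\CalK$ onto $\CalH$; this follows by applying $\rho$ to an approximate identity of $A$ sitting in the upper-left corner of $L_X$ and using the non-degeneracy of $\pi_A$. Writing $v:\CalH\to\CalH\oplus\CalK$ for the canonical isometric inclusion, so that $v^*v=1_{\CalH}$ and $vv^*=\rho(p)$, I would define the linear map $\pi_Z:Z\to\BB(\CalH, \CalH\oplus\CalK)$ by $\pi_Z(\xi)=\rho(\xi)v$.

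It then remains to verify Conditions (1)--(3) of Definition \ref{Def:MR1} for the triple $(\rho, \pi_Z, \pi_A)$. For Condition (1), using $vv^*=\rho(p)$ and $\xi p=\xi$ for $\xi\in Z$, I would compute $\pi_Z(\xi)\pi_Z(\eta)^*=\rho(\xi)vv^*\rho(\eta)^*=\rho(\xi)\rho(p)\rho(\eta^*)=\rho(\xi p\eta^*)=\rho(\xi\eta^*)=\rho({}_{L_X}\la\xi,\eta\ra)$. For Condition (2), since $\xi^*\eta\in pL_Xp=A$ and the restriction of $\rho$ to this corner acts as $\pi_A$ on $\CalH$, I would obtain $\pi_Z(\xi)^*\pi_Z(\eta)=v^*\rho(\xi^*\eta)v=\pi_A(\xi^*\eta)=\pi_A(\la\xi,\eta\ra_A)$. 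Condition (3) is immediate from the identity $\rho(a)v=v\pi_A(a)$ for $a\in A=pL_Xp$ together with the multiplicativity of $\rho$. Hence $(\rho, \pi_Z, \pi_A)$ is a representation of $Z$ on the pair $(\CalH\oplus\CalK, \CalH)$, and $(\rho, \CalH\oplus\CalK)$ is strongly Morita equivalent to $(\pi_A, \CalH)$. Replacing $p$ by $q$, $A$ by $B$, $\CalH$ by $\CalK$, and $L_X p$ by $L_X q$ yields the equivalence with $(\pi_B, \CalK)$ verbatim.

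The computations above are all routine once the framework is in place, so I expect the only genuine content to lie in the two structural facts that make everything work: that $L_X p$ is truly an equivalence bimodule (which rests on the fullness of $p$ already recorded before the statement) and that $\rho(p)$ is exactly the projection of $\CalH\oplus\CalK$ onto $\CalH$ (which rests on the non-degeneracy of $\pi_A$ and $\rho$). The identity $vv^*=\rho(p)$ is the linchpin of Condition (1), so that is the point where I would be most careful; everything else reduces to matrix bookkeeping in $L_X$.
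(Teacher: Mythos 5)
Your proposal is correct and is essentially the paper's argument: the paper also uses a corner of the linking algebra (it takes $Y=pL_X$ as an $A$--$L_X$ bimodule with $\pi_Y$ given by the explicit matrix formula, which is precisely the dual of your $Z=L_Xp$ with $\pi_Z(\xi)=\rho(\xi)v$) and handles $(\pi_B,\CalK)$ by symmetry via Lemma \ref{lem:MR3}. Your identification of $\underline{\rho}(p)$ with $P_{\CalH}$ via an approximate unit and non-degeneracy is a sound substitute for the paper's later enveloping-von-Neumann-algebra justification, and your verification of Conditions (1)--(3) spells out what the paper calls routine computations.
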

\begin{proof} We have only to show that $(\rho, \CalH\oplus\CalK)$ is strongly Morita
equivalent to $(\pi_A , \CalH)$ by Lemma \ref{lem:MR3}. Let $Y=pL_X$.
Since we identify $pL_X p$ with $A$, $Y$ can be regarded
as an $A-L_X$-equivalence bimodule in the usual way. Let $\pi_Y$ be the linear map
from $Y$ to $\BB(\CalH\oplus\CalK , \CalH)$ defined by
$$
\pi_Y (\begin{bmatrix} a & x \\
0 & 0 \end{bmatrix} )=\begin{bmatrix} \pi_A (a) & \pi_X (x) \\
0 & 0 \end{bmatrix} 
$$
for any $a\in A$, $x\in X$, where we identify $pL_X $ with the space
$$
\{ \begin{bmatrix} a & x \\
0 & 0 \end{bmatrix} \, | \, a\in A ,\quad x\in X \} .
$$
Then by routine computations, we can see that $(\pi_A , \pi_Y , \rho)$ is a representation of $Y$ on
the pair of Hilbert spaces $(\CalH, \CalH\oplus\CalK)$. Therefore, $(\rho, \CalH\oplus\CalK)$ is strongly Morita
equivalent to $(\pi_A , \CalH)$.
\end{proof}

Since $(\rho, \CalH\oplus\CalK)$ is a non-degenerate representation of $L_X$, by Pedersen
\cite [Theorem 3.7.7]{Pedersen:auto},
there is a unique normal homomorphism $\rho ''$ of $L_X ''$ onto $\rho(L_X )''$, which extends $\rho$,
where $L_X ''$ is the enveloping von Neumann algebra of $L_X$. Also, by \cite [3.12.1]{Pedersen:auto}
we can regard $M(L_X )$ as a $C^*$-subalgebra of $L_X ''$. Furthermore, we see that $\underline{\rho}$ is
the restriction of $\rho''$ to $M(L_X )$. Hence $\underline{\rho}(p)$ and $\underline{\rho}(q)$ are the projections
in $\BB(\CalH\oplus\CalK)$ with their ranges are $\CalH\oplus 0$ and $0\oplus\CalK$, respectively.
That is, $P_{\CalH}=\underline{\rho}(p)$ and $P_{\CalK}=\underline{\rho}(q)$, where $P_{\CalH}$
and $P_{\CalK}$ are projections from $\CalH\oplus\CalK$ onto $\CalH\oplus 0$ and $0\oplus\CalK$,
respectively. Furthermore, we assume that
$A$ and $B$ are $\sigma$-unital stable $C^*$-algebras. Then in the same way as in the proof of
Brown, Green and Rieffel \cite [Theorem 3.4]{BGR:linking}, there is a partial isometry $w\in M(L_X )$ such that
$w^* w=p$, $ww^* =q$. Let $\theta$ be the map from $pL_X p$ to $qL_X q$ defined by
$$
\theta(\begin{bmatrix} a & 0 \\
0 & 0 \end{bmatrix})=w\begin{bmatrix} a & 0 \\
0 & 0 \end{bmatrix}w^*
$$
for any $a\in A$. Then $\theta$ is an isomorphism of $pL_X p$ onto $qL_X q$. Identifying $A$ and $B$ with
$pL_X p$ and $qL_X q$, respectively, we can regard $\theta$ as an isomorphism of $A$ onto $B$. Let
$W=\underline{\rho}(w)$. Then $W\in \BB(\CalH\oplus \CalK)$ and
$$
W^* W=\underline{\rho}(w^* w)=\underline{\rho}(p)=P_{\CalH} , \quad
WW^* =\underline{\rho}(ww^* )=\underline{\rho}(q)=P_{\CalK} .
$$

\begin{lemma}\label{lem:MR4-2} With the above notation, there is an isometry $\widetilde{W}$ from
$\CalH$ onto $\CalK$ such that
$$
\pi_B (\theta(a))=\widetilde{W}\pi_A (a)\widetilde{W}^*
$$
for any $a\in A$.
\end{lemma}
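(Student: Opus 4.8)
The plan is to realize $\widetilde{W}$ as the $(2,1)$-corner of the partial isometry $W$ and then transport the conjugation $a\mapsto waw^*$ through the representation $\rho$. First I would fix the canonical embeddings $\iota_{\CalH}:\CalH\to\CalH\oplus\CalK$, $\xi\mapsto(\xi,0)$ and $\iota_{\CalK}:\CalK\to\CalH\oplus\CalK$, $\eta\mapsto(0,\eta)$, so that $\iota_{\CalH}^*\iota_{\CalH}=\id_{\CalH}$, $\iota_{\CalK}^*\iota_{\CalK}=\id_{\CalK}$, $\iota_{\CalH}\iota_{\CalH}^*=P_{\CalH}$ and $\iota_{\CalK}\iota_{\CalK}^*=P_{\CalK}$, and define $\widetilde{W}=\iota_{\CalK}^* W\iota_{\CalH}:\CalH\to\CalK$. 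Using only the relations $W^*W=P_{\CalH}$ and $WW^*=P_{\CalK}$, I would check that $\widetilde{W}$ is an isometry onto $\CalK$: since the range of $W$ equals the range of $WW^*=P_{\CalK}$, namely $0\oplus\CalK$, the vector $W\iota_{\CalH}\xi$ already lies in $0\oplus\CalK$, whence $W\iota_{\CalH}\xi=\iota_{\CalK}\widetilde{W}\xi$ and $\|\widetilde{W}\xi\|^2=\langle W^*W\iota_{\CalH}\xi,\iota_{\CalH}\xi\rangle=\|\xi\|^2$ by $W^*W=P_{\CalH}$; surjectivity follows because for $\eta\in\CalK$ the vector $\zeta=W^*\iota_{\CalK}\eta$ lies in the range of $W^*W=P_{\CalH}$, so $\zeta=\iota_{\CalH}\xi$ with $W\iota_{\CalH}\xi=WW^*\iota_{\CalK}\eta=\iota_{\CalK}\eta$, i.e.\ $\widetilde{W}\xi=\eta$.

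The heart of the argument is the operator identity $\rho(waw^*)=W\rho(a)W^*$, where $a$ is identified with $\begin{bmatrix} a & 0 \\ 0 & 0\end{bmatrix}\in pL_X p$. Because $(\rho,\CalH\oplus\CalK)$ is non-degenerate, its multiplier extension $\underline{\rho}$ satisfies $\rho(m\cdot x)=\underline{\rho}(m)\rho(x)$ and $\rho(x\cdot m)=\rho(x)\underline{\rho}(m)$ for $m\in M(L_X)$ and $x\in L_X$; applying this with $m=w$ and $m=w^*$, together with $W=\underline{\rho}(w)$, yields $\rho(waw^*)=\underline{\rho}(w)\rho(a)\underline{\rho}(w^*)=W\rho(a)W^*$. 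From the definition of $\rho$ we have $\rho(a)=\iota_{\CalH}\pi_A(a)\iota_{\CalH}^*$, while $waw^*=\theta(a)\in qL_X q\cong B$ gives $\rho(\theta(a))=\iota_{\CalK}\pi_B(\theta(a))\iota_{\CalK}^*$. Substituting both into $\rho(waw^*)=W\rho(a)W^*$ produces $\iota_{\CalK}\pi_B(\theta(a))\iota_{\CalK}^*=W\iota_{\CalH}\pi_A(a)\iota_{\CalH}^*W^*$, and compressing by $\iota_{\CalK}^*(\cdot)\iota_{\CalK}$ — using $\iota_{\CalK}^*\iota_{\CalK}=\id_{\CalK}$ on the left and $\widetilde{W}=\iota_{\CalK}^*W\iota_{\CalH}$, $\widetilde{W}^*=\iota_{\CalH}^*W^*\iota_{\CalK}$ on the right — gives exactly $\pi_B(\theta(a))=\widetilde{W}\pi_A(a)\widetilde{W}^*$.

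I expect the main obstacle to be the bookkeeping in the identity $\rho(waw^*)=W\rho(a)W^*$: one must keep straight that $w,w^*\in M(L_X)$ act as multipliers while $a\in L_X$, so that $waw^*$ lands back in $L_X$ (indeed in $qL_X q$) and the two extension relations for $\underline{\rho}$ apply verbatim. Equivalently, one may pass through the normal homomorphism $\rho''$ of $L_X''$, which restricts to $\rho$ on $L_X$ and to $\underline{\rho}$ on $M(L_X)$ and is multiplicative, giving the same identity. Everything else is a routine compression of $2\times 2$ operator matrices through the corner embeddings $\iota_{\CalH}$ and $\iota_{\CalK}$.
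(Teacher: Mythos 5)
Your proof is correct and follows essentially the same route as the paper: you define $\widetilde{W}$ as the $\CalK$--$\CalH$ corner of $W=\underline{\rho}(w)$, verify isometry and surjectivity from $W^*W=P_{\CalH}$, $WW^*=P_{\CalK}$, and derive the intertwining identity from $\rho(waw^*)=\underline{\rho}(w)\rho(a)\underline{\rho}(w^*)$, which is exactly the paper's computation (the paper inserts $q=ww^*$ to exhibit the corner, where you compress by the embeddings $\iota_{\CalH}$, $\iota_{\CalK}$ at the end). The only difference is notational bookkeeping.
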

\begin{proof} Let $\widetilde{W}=W|_{\CalH}=WP_{\CalH}$. Then for any $\xi \in\CalH$,
$$
\widetilde{W}\xi=WP_{\CalH}\xi=W\xi =WW^* W\xi=P_{\CalK}W\xi\in\CalK .
$$
Hence $\widetilde{W}\in\BB(\CalH, \CalK)$. For any $\eta\in\CalK$,
$$
W^* \eta =W^* WW^* \eta =P_{\CalH}W^* \eta \in\CalH .
$$
Then
$$
\widetilde{W}W^* \eta =WW^* \eta =P_{\CalK}\eta =\eta .
$$
Thus $\widetilde{W}$ is surjective. Furthermore, for any $\xi_1 , \xi_2 \in \CalH$,
\begin{align*}
\la \widetilde{W}\xi_1 \, , \, \widetilde{W}\xi_2 \ra_{\CalK} & =\la W\xi_1 \, , \, W\xi_2 \ra_{\CalH\oplus\CalK}
=\la \xi_1\,  , \, W^* W\xi_2 \ra_{\CalH\oplus\CalK}
=\la \xi_1 \, , \, P_{\CalH}\xi_2 \ra_{\CalH\oplus\CalK} \\
& =\la \xi_1 \, , \, \xi_2 \ra_{\CalH} .
\end{align*}
Hence $\widetilde{W}$ is an isometry from $\CalH$ onto $\CalK$. Finally, for any $a\in A$,
\begin{align*}
\pi_B (\theta(a)) & =\pi_B (w\begin{bmatrix} a & 0 \\
0 & 0 \end{bmatrix}w^* )
=\pi_B (ww^*w \begin{bmatrix} a & 0 \\
0 & 0 \end{bmatrix}w^* ww^* ) \\
& =\pi_B (qw \begin{bmatrix} a & 0 \\
0 & 0 \end{bmatrix}w^* q )
=\rho(qw \begin{bmatrix} a & 0 \\
0 & 0 \end{bmatrix}w^* q ) \\
& =\underline{\rho}(qw )\begin{bmatrix} \pi_A (a) & 0 \\
0 & 0 \end{bmatrix}\underline{\rho}(w^* q )
=P_{\CalK}W \begin{bmatrix} \pi_A (a) & 0 \\
0 & 0 \end{bmatrix} W^* P_{\CalK} \\
& =WP_{\CalH}\pi_A (a)P_{\CalH}W^* 
=\widetilde{W}\pi_A (a)\widetilde{W}^* .
\end{align*} 
Therefore, we obtain the conclusion.
\end{proof}

Next, we will give an easy example of non-degenerate representations of $C^*$-algebras which are
strongly Morita equivalent.
\par
Let $(\pi, \CalH)$ be a non-degenerate representation of a $C^*$-algebra $A$.
Let $A^s =A\otimes\BK$ and let $\pi^s =\pi\otimes\id_{\BK}$ and $\CalH^s =\CalH\otimes\CalH_0$. Then
$(\pi^s ,\CalH^s )$ is a non-degenerate representation of $A^s$ on $\CalH^s$.

\begin{exam}\label{exam:MR5} With the above notation, $(\pi, \CalH)$ and $(\pi^s , \CalH^s )$ are
strongly Morita equivalent.
\end{exam}
\begin{proof} Let $X=A^s (1\otimes e_{11})$.
Then $X$ is an $A^s -A$-equivalence bimodule in the usual way, where we identify $A$ with
$(1_{M(A)}\otimes e_{11})(A\otimes\BK)(1_{M(A)}\otimes e_{11})$.
Let $\imath_{\CalH}$ be the linear map from $\CalH$ to $\CalH^s$ defined by $\imath_{\CalH}\xi=\xi\otimes\epsilon_1$
for any $\xi\in\CalH$. Then $\imath_{\CalH}$ is an isometry and
$$
\imath_{\CalH}^* (\xi\otimes\epsilon_i )=\begin{cases} \xi & \text{if $i=1$} \\
0 & \text{if $i\geqq 2$} \end{cases}
$$
for any $\xi\in\CalH$ by routine computations. Thus $\imath_{\CalH}\imath_{\CalH}^* =1\otimes e_{11}$
and $\imath_{\CalH}^* \imath_{\CalH}=\id_{\CalH}$ on $\CalH$. Let $\pi_X$ be the linear map from $X$
to $\BB(\CalH, \CalH^s )$ defined by
$$
\pi_X (a(1_{M(A)}\otimes e_{11}))=\pi^s (a(1_{M(A)}\otimes e_{11}))\imath_{\CalH}
=\pi^s (a)(1_{M(A)}\otimes e_{11})\imath_{\CalH}
$$
for any $a\in A^s$. Then for any $a, b\in A^s$,
$$
\pi^s ({}_{A^s} \la a(1_{M(A)}\otimes e_{11}) \, , \, b(1_{M(A)}\otimes e_{11})\ra )
=\pi^s (a(1_{M(A)}\otimes e_{11})b^* ) .
$$
On the other hand,
\begin{align*}
\pi_X (a(1_{M(A)}\otimes e_{11}))\pi_X (b(1_{M(A)}\otimes e_{11}))^* 
& =\pi^s (a)(1\otimes e_{11})\imath_{\CalH}\imath_{\CalH}^* (1\otimes e_{11})\pi^s (b)^* \\
& =\pi^s (a)(1\otimes e_{11})\pi^s (b)^*
\end{align*}
since $\imath_{\CalH}i_{\CalH}^* =1\otimes e_{11}$. Thus
$$
\pi^s ({}_{A^s} \la a(1_{M(A)}\otimes e_{11}) \, , \, b(1_{M(A)}\otimes e_{11})\ra )
=\pi_X (a(1_{M(A)}\otimes e_{11}))\pi_X (b(1_{M(A)}\otimes e_{11}))^* 
$$
Also, for any $a, b\in A^s$,
$$
\pi(\la a(1\otimes e_{11}) \, , \, b(1\otimes e_{11})\ra_A )
=\pi^s ((1\otimes e_{11})a^* b(1\otimes e_{11}))
=(1\otimes e_{11})\pi^s (a^* b)(1\otimes e_{11}) .
$$
Since we identify $A$ with $(1\otimes e_{11})A^s (1\otimes e_{11})$, we regard
$\CalH$ as the closed subspace $\CalH\otimes\epsilon_1$ of $\CalH^s$. Hence
we can regard $(1\otimes e_{11})\pi^s (a^* b)(1\otimes e_{11})$ as an element
$\imath_{\CalH}^* (1\otimes e_{11})\pi^s (a^* b)(1\otimes e_{11})\imath_{\CalH}$ in $\BB(\CalH)$.
On the other hand,
\begin{align*}
\pi_X (a(1\otimes e_{11}))^* \pi_X (b(1\otimes e_{11}))
& =(\pi^s (a)(1\otimes e_{11})\imath_{\CalH})^* (\pi^s (b)(1\otimes e_{11})\imath_{\CalH}) \\
& =\imath_{\CalH}^* (1\otimes e_{11})\pi^s (a^* b)(1\otimes e_{11})\imath_{\CalH} .
\end{align*}
Thus
$$
\pi(\la a(1\otimes e_{11}) \, , \, b(1\otimes e_{11})\ra_A )
=\pi_X (a(1\otimes e_{11}))^* \pi_X (b(1\otimes e_{11}))
$$
for any $a, b\in A^s$. Furthermore, for any $a\in A^s$, $b\in A$, $x\in A^s$,
\begin{align*}
\pi_X (a\cdot x(1\otimes e_{11})\cdot b) & =\pi_X (ax(b\otimes e_{11}))
=\pi^s (ax(b\otimes e_{11}))\imath_{\CalH} \\
& =\pi^s (a)\pi^s (x)(1\otimes e_{11})(\pi(b)\otimes e_{11})\imath_{\CalH} .
\end{align*}
For any $\xi\in\CalH$,
$$
(\pi(b)\otimes e_{11})\imath_{\CalH}\xi =(\pi(b)\otimes e_{11})(\xi\otimes\epsilon_1 )
=\pi(b)\xi\otimes\epsilon_1 =\imath_{\CalH}\pi(b)\xi .
$$
Hence $(\pi(b)\otimes e_{11})\imath_{\CalH}=\imath_{\CalH}\pi(b)$. Thus
$$
\pi_X (a\cdot x(1\otimes e_{11})\cdot b)=\pi^s (a)\pi^s (x)(1\otimes e_{11})\imath_{\CalH}\pi(b)
=\pi^s (a)\pi_X (x(1\otimes e_{11}))\pi(b) .
$$
Therefore, $(\pi^s , \pi_X , \pi)$ is a representation of $X$ on the pair of spaces $(\CalH^s, \CalH)$, that is,
$(\pi^s , \CalH^s )$ and $(\pi, \CalH)$ are strongly Morita equivalent.
\end{proof}

\section{Definition and properties of strong Morita equivalence
for completely positive linear maps}\label{sec:DP} In this section, we define strong Morita equivalence
for completely positive linear maps from a $C^*$-algebra to a $C^*$-algebra of all bounded linear operators on
a Hilbert space.
\par
Let $\phi$ be a completely positive linear map from a $C^*$-algebra $A$ 
to $\BB(\CalH)$, where $\CalH$ is a Hilbert space $\CalH$.
Then by a Stinespring dilation theorem, there are a Hilbert space $\CalH_{\phi}$, a representation
$\pi_{\phi}$ of $A$ on $\CalH_{\phi}$ and $V_{\phi}\in\BB(\CalH, \CalH_{\phi})$ with $||V_{\phi}||=||\phi||$
such that
$$
\phi(a)=V_{\phi}^* \pi_{\phi}(a)V_{\phi}
$$
for any $a\in A$ and such that $\CalH_{\phi}=\overline{\pi_{\phi}(A)V_{\phi}\CalH}$.
For more information, see Blackadar \cite [II. 6.9.7]{Blackadar:operator}, Paulsen \cite [Theorem 4.1]{Paulsen:CB-maps}
and Stinespring \cite {Stine:positive}. We call the above $(\pi_{\phi}, V_{\phi}, \CalH_{\phi})$ a
\sl
minimal Stinespring representation
\rm
for $\phi$.
Furthermore, we can see that $(\pi_{\phi}, V_{\phi}, \CalH_{\phi})$ is unique in the sense of
\cite [Proposition 4.2]{Paulsen:CB-maps}, that is,

\begin{prop}\label{prop:DP1} $($\cite [Proposition 4.2]{Paulsen:CB-maps}$)$ With the above notation,
if $(\rho,W, \CalK)$ is another minimal Stinespring representation for $\phi$, then there is an
isometry $U$ from $\CalH_{\phi}$ onto $\CalK$ satisfying that $UV_{\phi}=W$ and that 
$U\pi_{\phi}(a)U^* =\rho(a)$ for any $a\in A$.
\end{prop}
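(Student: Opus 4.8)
The plan is to construct the desired isometry directly on a dense subspace and to read off all of its properties from the single defining identity $\phi(a)=V_{\phi}^* \pi_{\phi}(a) V_{\phi}=W^* \rho(a)W$. First I would define $U$ on the subspace $\pi_{\phi}(A)V_{\phi}\CalH$ of $\CalH_{\phi}$, which is dense by the minimality condition $\CalH_{\phi}=\overline{\pi_{\phi}(A)V_{\phi}\CalH}$, by setting
$$
U\Bigl(\sum_i \pi_{\phi}(a_i)V_{\phi}\xi_i\Bigr)=\sum_i \rho(a_i)W\xi_i
$$
for any finite sum with $a_i \in A$ and $\xi_i \in\CalH$.

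The heart of the argument is a single norm computation that simultaneously yields well-definedness and isometry. For a finite sum I would compute
$$
\Bigl\|\sum_i \rho(a_i)W\xi_i\Bigr\|^2 =\sum_{i,j}\la W^* \rho(a_i^* a_j)W\xi_j \, , \, \xi_i \ra_{\CalH}=\sum_{i,j}\la \phi(a_i^* a_j)\xi_j \, , \, \xi_i \ra_{\CalH},
$$
using $W^* \rho(c)W=\phi(c)$. Running the identical computation with $(\pi_{\phi},V_{\phi})$ in place of $(\rho,W)$ produces the same final expression, so
$$
\Bigl\|\sum_i \pi_{\phi}(a_i)V_{\phi}\xi_i\Bigr\|^2 =\Bigl\|\sum_i \rho(a_i)W\xi_i\Bigr\|^2 .
$$
In particular, whenever the left-hand vector vanishes so does the right-hand one, which makes $U$ well-defined, and the equality of norms makes $U$ isometric on the dense subspace. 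Hence $U$ extends to an isometry of $\CalH_{\phi}$ into $\CalK$; its range is closed (being the image of an isometry on a complete space) and contains the dense subspace $\rho(A)W\CalH$, so $U$ maps $\CalH_{\phi}$ onto $\CalK$.

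It remains to verify the two intertwining relations. For the covariance relation I would check $U\pi_{\phi}(a)=\rho(a)U$ on the dense subspace: for any $b\in A$ and $\xi\in\CalH$,
$$
U\pi_{\phi}(a)\pi_{\phi}(b)V_{\phi}\xi =U\pi_{\phi}(ab)V_{\phi}\xi=\rho(ab)W\xi=\rho(a)\rho(b)W\xi=\rho(a)U\pi_{\phi}(b)V_{\phi}\xi ,
$$
and then extend by continuity; since $U$ is a surjective isometry this gives $U\pi_{\phi}(a)U^* =\rho(a)$.

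The step I expect to be the main obstacle is $UV_{\phi}=W$ in the possibly non-unital case, where $V_{\phi}\xi$ need not be a single term of the form on which $U$ was defined. Here I would use that minimality forces $\pi_{\phi}$ and $\rho$ to be non-degenerate, so that for an approximate identity $\{e_\lambda\}$ of $A$ one has $\pi_{\phi}(e_\lambda)V_{\phi}\xi\to V_{\phi}\xi$ in $\CalH_{\phi}$ and $\rho(e_\lambda)W\xi\to W\xi$ in $\CalK$. Applying $U$ and using its continuity then gives $UV_{\phi}\xi=\lim_\lambda U\pi_{\phi}(e_\lambda)V_{\phi}\xi=\lim_\lambda \rho(e_\lambda)W\xi=W\xi$, which completes the proof. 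When $A$ is unital this last step is immediate from $V_{\phi}\xi=\pi_{\phi}(1)V_{\phi}\xi$.
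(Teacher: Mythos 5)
Your proof is correct, and it is essentially the standard argument: the paper itself gives no proof of this proposition, simply citing \cite[Proposition 4.2]{Paulsen:CB-maps}, and your construction of $U$ on the dense subspace $\pi_{\phi}(A)V_{\phi}\CalH$ via the norm identity coming from $\phi(a^*b)=V_{\phi}^*\pi_{\phi}(a^*b)V_{\phi}=W^*\rho(a^*b)W$ is exactly the argument found there. Your careful treatment of $UV_{\phi}=W$ in the non-unital case via an approximate identity and the non-degeneracy of minimal Stinespring representations (which the paper records separately in Remark \ref{rem:PD2}) is a welcome detail that the cited source handles in the same spirit.
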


\begin{remark}\label{rem:PD2} We note that a minimal Stinespring representation for $\phi$ is non-degenerate.
Indeed, let $(\pi_{\phi}, V_{\phi}, \CalH_{\phi})$ be a minimal Stinespring representation for $\phi$. Then
since $V_{\phi}\CalH\subset\CalH_{\phi}$,
$$
\CalH_{\phi}=\overline{\pi_{\phi}(A)V_{\phi}\CalH}\subset \overline{\pi_{\phi}(A)\CalH_{\phi}}
\subset \CalH_{\phi} .
$$
Thus $\CalH_{\phi}=\overline{\pi_{\phi}(A)\CalH_{\phi}}$, that is, $(\pi_{\phi}, \CalH_{\phi})$ is non-degenerate.
\end{remark}

Let $\phi$ and $\psi$ be completely positive linear maps from $C^*$-algebras $A$ and $B$ to $\BB(\CalH)$
and $\BB(\CalK)$, respectively, where $\CalH$ and $\CalK$ are Hilbert spaces.

\begin{Def}\label{Def:DP3} We say that $\phi$ is
\sl
strongly Morita equivalent
\rm
to $\psi$ if a minimal Stinespring representation for $\phi$ is strongly Morita equivalent to
that for $\psi$.
\end{Def}

\begin{prop}\label{prop:DP4} Strong Morita equivalence for completely positive linear maps on
$C^*$-algebras is equivalence relation.
\end{prop}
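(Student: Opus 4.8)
The plan is to reduce strong Morita equivalence for completely positive linear maps, via Definition \ref{Def:DP3}, to strong Morita equivalence for their minimal Stinespring representations, and then invoke Lemma \ref{lem:MR3}, where the latter equivalence relation has already been established. The subtle point is that Definition \ref{Def:DP3} speaks of ``a'' minimal Stinespring representation, so before citing Lemma \ref{lem:MR3} I must confirm that the property is well defined independent of the chosen dilations. Proposition \ref{prop:DP1} provides exactly the needed rigidity: any two minimal Stinespring representations of the same map are unitarily equivalent, and by Lemma \ref{lem:MR3-2} unitarily equivalent non-degenerate representations are strongly Morita equivalent.

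First I would verify reflexivity: given a completely positive linear map $\phi$ from $A$ to $\BB(\CalH)$ with minimal Stinespring representation $(\pi_\phi, V_\phi, \CalH_\phi)$, Remark \ref{rem:PD2} tells us $(\pi_\phi, \CalH_\phi)$ is non-degenerate, and by the reflexivity part of Lemma \ref{lem:MR3} it is strongly Morita equivalent to itself; hence $\phi$ is strongly Morita equivalent to $\phi$. Next, for symmetry, suppose $\phi$ is strongly Morita equivalent to $\psi$, so that some minimal Stinespring representation of $\phi$ is strongly Morita equivalent to some minimal Stinespring representation of $\psi$. Applying the symmetry part of Lemma \ref{lem:MR3} reverses the two representations, so $\psi$ is strongly Morita equivalent to $\phi$.

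For transitivity, suppose $\phi$ is strongly Morita equivalent to $\psi$ and $\psi$ is strongly Morita equivalent to $\chi$, where $\chi$ is a completely positive linear map from a $C^*$-algebra $C$ to $\BB(\CalL)$. By hypothesis there are minimal Stinespring representations $(\pi_\phi, V_\phi, \CalH_\phi)$ and $(\pi_\psi, V_\psi, \CalH_\psi)$ with $(\pi_\phi, \CalH_\phi)$ strongly Morita equivalent to $(\pi_\psi, \CalH_\psi)$, and minimal Stinespring representations $(\rho_\psi, W_\psi, \CalK_\psi)$ and $(\pi_\chi, V_\chi, \CalH_\chi)$ with $(\rho_\psi, \CalK_\psi)$ strongly Morita equivalent to $(\pi_\chi, \CalH_\chi)$. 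Here the two dilations of $\psi$ need not coincide, so I would apply Proposition \ref{prop:DP1} to obtain an isometry identifying $(\pi_\psi, \CalH_\psi)$ and $(\rho_\psi, \CalK_\psi)$; Lemma \ref{lem:MR3-2} then shows these two representations of $B$ are strongly Morita equivalent. Chaining the three strong Morita equivalences through the transitivity part of Lemma \ref{lem:MR3} yields that $(\pi_\phi, \CalH_\phi)$ is strongly Morita equivalent to $(\pi_\chi, \CalH_\chi)$, whence $\phi$ is strongly Morita equivalent to $\chi$.

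The main obstacle, and the only genuinely nontrivial point, is this reconciliation of the two different minimal Stinespring dilations of the intermediate map $\psi$; everything else is a direct transfer of the already-proven Lemma \ref{lem:MR3} through Definition \ref{Def:DP3}. Once Proposition \ref{prop:DP1} together with Lemma \ref{lem:MR3-2} bridges that gap, the three relation axioms follow immediately from their counterparts for non-degenerate representations.
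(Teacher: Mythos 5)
Your proposal is correct and follows exactly the paper's route: the paper's proof simply cites Lemmas \ref{lem:MR3}, \ref{lem:MR3-2} and Proposition \ref{prop:DP1}, and you have spelled out precisely how those three ingredients combine, including the key point of reconciling two different minimal Stinespring dilations of the intermediate map via Proposition \ref{prop:DP1} and Lemma \ref{lem:MR3-2}.
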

\begin{proof} This is immediate by Lemmas \ref{lem:MR3}, \ref{lem:MR3-2} and
Proposition \ref{prop:DP1}.
\end{proof}

Let $\phi$ and $\psi$ be completely positive linear maps from $C^*$-algebra $A$ and
$B$ to $\BB(\CalH)$ and $\BB(\CalK)$, respectively, which are strongly Morita equivalent.
Let $(\pi_{\phi}, V_{\phi}, \CalH_{\phi})$ and
$(\pi_{\psi}, V_{\psi}, \CalH_{\psi})$ be minimal Stinespring representations for them.
Then since $(\pi_{\phi}, \CalH_{\phi})$ and $(\pi_{\psi}, \CalH_{\psi})$ are strongly Morita
equivalent, there are an $A-B$-equivalence bimodule $X$ and a linear map $\pi_X$
from $X$ to $\BB(\CalH_{\psi}, \CalH_{\phi})$ such that $(\pi_{\phi}, \pi_X , \pi_{\psi})$ is
a representation of $X$ on the pair of Hilbert spaces $(\CalH_{\phi}, \CalH_{\psi})$.
Let $L_X$ be the linking $C^*$-algebra for $X$ and let $\rho$ be the representation of
$L_X$ on $\CalH_{\phi}\oplus\CalH_{\psi}$ induced by the representation $(\pi_{\phi}, \pi_X , \pi_{\psi})$
of $X$, which is defined in Section \ref{sec:MR}. Let $\tau$ be the completely positive linear map from $L_X$
to $\BB(\CalH\oplus\CalK)$ defined by
$$
\tau(\begin{bmatrix} a & x \\ 
\widetilde{y} & b \end{bmatrix})
=\begin{bmatrix} V_{\phi}^* & 0 \\ 
0 & V_{\psi}^* \end{bmatrix}
\rho(\begin{bmatrix} a & x \\
\widetilde{y} & b \end{bmatrix})
\begin{bmatrix} V_{\phi} & 0 \\
0 & V_{\psi} \end{bmatrix}
$$
for any $\begin{bmatrix} a & x \\
\widetilde{y} & b \end{bmatrix}\in L_X$.

\begin{lemma}\label{lem:DP5} With the above notation,
$(\rho, V_{\phi}\oplus V_{\psi}, \, \CalH_{\phi}\oplus\CalH_{\psi})$
is a minimal Stinespring representation for $\tau$.
\end{lemma}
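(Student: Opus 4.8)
The plan is to verify, one by one, the conditions in the definition of a minimal Stinespring representation recalled just before Proposition~\ref{prop:DP1}, applied now to $\tau$. Abbreviate $W = V_{\phi} \oplus V_{\psi} \in \BB(\CalH \oplus \CalK, \, \CalH_{\phi} \oplus \CalH_{\psi})$. Since $\rho$ was already shown in Section~\ref{sec:MR} to be a non-degenerate representation of $L_X$ on $\CalH_{\phi} \oplus \CalH_{\psi}$, and since the identity $\tau(c) = W^* \rho(c) W$ for $c \in L_X$ is exactly the defining formula for $\tau$, the representation property and the Stinespring identity hold for free. Everything therefore reduces to the minimality (density) condition
\[
\CalH_{\phi} \oplus \CalH_{\psi} = \overline{\rho(L_X) \, W \, (\CalH \oplus \CalK)}
\]
and to the norm normalization of $W$.

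For the density condition I would argue that the two diagonal corners of $L_X$ already suffice, so the off-diagonal terms $\pi_X(x)$ and $\pi_X(y)^*$ never need to be analyzed. Concretely, for $a \in A$, $\xi \in \CalH$ and $\eta \in \CalK$ one computes
\[
\rho\!\left( \begin{bmatrix} a & 0 \\ 0 & 0 \end{bmatrix} \right) W (\xi \oplus \eta) = \pi_{\phi}(a) V_{\phi} \xi \oplus 0 .
\]
Letting $a$ range over $A$ and $\xi$ over $\CalH$ and passing to the closed linear span, the right-hand side sweeps out $\overline{\pi_{\phi}(A) V_{\phi} \CalH} \oplus 0$, which equals $\CalH_{\phi} \oplus 0$ precisely because $(\pi_{\phi}, V_{\phi}, \CalH_{\phi})$ is a minimal Stinespring representation for $\phi$. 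The symmetric computation with $\begin{bmatrix} 0 & 0 \\ 0 & b \end{bmatrix}$, $b \in B$, produces $0 \oplus \CalH_{\psi}$. Hence $\overline{\rho(L_X) W (\CalH \oplus \CalK)}$ contains both $\CalH_{\phi} \oplus 0$ and $0 \oplus \CalH_{\psi}$, so it contains all of $\CalH_{\phi} \oplus \CalH_{\psi}$; the reverse inclusion is automatic since $\rho$ takes values in $\BB(\CalH_{\phi} \oplus \CalH_{\psi})$. This gives the density condition.

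For the norm normalization, I would appeal to the standard fact that a Stinespring dilation satisfying the density condition is automatically normalized: the density condition forces $\rho$ to be non-degenerate (as in Remark~\ref{rem:PD2}), and then the usual approximate-identity argument applied to $\tau(c) = W^* \rho(c) W$ yields the required equality between the norm of $W$ and the norm of $\tau$. I do not expect any genuine obstacle; the only step that might look delicate, namely handling a general element $\begin{bmatrix} a & x \\ \widetilde{y} & b \end{bmatrix}$ of $L_X$ in the density computation, is bypassed entirely by restricting to the two diagonal corners, each of which recovers one summand of $\CalH_{\phi} \oplus \CalH_{\psi}$ directly from the minimality of the Stinespring representations of $\phi$ and $\psi$.
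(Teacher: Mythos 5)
Your proposal is correct and follows essentially the same route as the paper: reduce everything to the density condition $\overline{\rho(L_X)(V_{\phi}\oplus V_{\psi})(\CalH\oplus\CalK)}=\CalH_{\phi}\oplus\CalH_{\psi}$, and obtain it from the two diagonal corners of $L_X$, which sweep out $\pi_{\phi}(A)V_{\phi}\CalH\oplus 0$ and $0\oplus\pi_{\psi}(B)V_{\psi}\CalK$ and hence, by minimality of the Stinespring representations for $\phi$ and $\psi$, all of $\CalH_{\phi}\oplus\CalH_{\psi}$. The only difference is your added remark on the norm normalization of $V_{\phi}\oplus V_{\psi}$, which the paper omits entirely.
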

\begin{proof} It suffices to show that $\rho(L_X )(V_{\phi}\oplus V_{\psi})(\CalH\oplus\CalK)$
is dense in $\CalH_{\phi}\oplus\CalH_{\psi}$. Indeed,
$$
\pi_{\phi}(A)V_{\phi}\CalH\oplus\pi_{\psi}(B)V_{\psi}\CalK\subset\rho(L_X )(V_{\phi}\oplus V_{\psi})(\CalH\oplus\CalK) .
$$
Since $\overline{\pi_{\phi}(A)V_{\phi}\CalH}=\CalH_{\phi}$ and $\overline{\pi_{\psi}(B)V_{\psi}\CalK}=\CalH_{\psi}$,
$$
\overline{\rho(L_X )(V_{\phi}\oplus V_{\psi})(\CalH\oplus\CalK)}=\CalH_{\phi}\oplus\CalH_{\psi} .
$$
Therefore, we obtain the conclusion.
\end{proof}

\begin{prop}\label{prop:DP6} Let $\phi$, $\psi$ and $\tau$ be as above. Then $\tau$ is
strongly Morita equivalent to $\phi$ and $\psi$ and
$$
\phi(a)=P_{\CalH} \tau(\begin{bmatrix} a & 0 \\
0 & 0 \end{bmatrix}) , \quad \psi(b)=P_{\CalK}\tau(\begin{bmatrix} 0 & 0 \\
0 & b \end{bmatrix}) ,
$$
for any $a\in A$ and $b\in B$, respectively,
where we identify $\CalH$ and $\CalK$ with $\CalH\oplus 0$ and $0\oplus \CalK$, respectively and $P_{\CalH}$
and $P_{\CalK}$ are projections from $\CalH\oplus\CalK$ onto $\CalH$ and $\CalK$, respectively.
\end{prop}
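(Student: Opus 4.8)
The plan is to split the proposition into two independent assertions: the two explicit formulas recovering $\phi$ and $\psi$ from $\tau$, which follow from a direct matrix computation, and the claim that $\tau$ is strongly Morita equivalent to $\phi$ and to $\psi$, which I would deduce from the structural lemmas of Section \ref{sec:MR} together with Lemma \ref{lem:DP5}.

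First I would verify the two displayed formulas. Evaluating $\rho$ on the corner element $\begin{bmatrix} a & 0 \\ 0 & 0\end{bmatrix}$ gives $\begin{bmatrix} \pi_{\phi}(a) & 0 \\ 0 & 0\end{bmatrix}$ directly from the definition of $\rho$, since all off-diagonal and lower-right entries there involve $\pi_X$ or $\pi_{\psi}$ applied to $0$. Substituting into the definition of $\tau$ and multiplying out the three $2\times 2$ matrices yields $\tau(\begin{bmatrix} a & 0 \\ 0 & 0\end{bmatrix}) = \begin{bmatrix} V_{\phi}^*\pi_{\phi}(a)V_{\phi} & 0 \\ 0 & 0\end{bmatrix}$, which equals $\begin{bmatrix}\phi(a) & 0 \\ 0 & 0\end{bmatrix}$ by the Stinespring identity $\phi(a)=V_{\phi}^*\pi_{\phi}(a)V_{\phi}$. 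Since this operator already has range in $\CalH\oplus 0$, left multiplication by $P_{\CalH}$ leaves it unchanged, and under the identification of $\CalH$ with $\CalH\oplus 0$ we recover $\phi(a)=P_{\CalH}\tau(\begin{bmatrix} a & 0 \\ 0 & 0\end{bmatrix})$. The formula for $\psi$ is obtained by the symmetric computation on the lower-right corner, using $\psi(b)=V_{\psi}^*\pi_{\psi}(b)V_{\psi}$.

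For the strong Morita equivalence I would invoke the definition directly. By Definition \ref{Def:DP3} it suffices to exhibit a minimal Stinespring representation for $\tau$ whose underlying non-degenerate representation is strongly Morita equivalent to a minimal Stinespring representation for $\phi$ (respectively $\psi$). Lemma \ref{lem:DP5} provides exactly such a representation, namely $(\rho, V_{\phi}\oplus V_{\psi}, \CalH_{\phi}\oplus\CalH_{\psi})$, whose underlying non-degenerate representation of $L_X$ is $(\rho, \CalH_{\phi}\oplus\CalH_{\psi})$. By Lemma \ref{lem:MR4}, $(\rho, \CalH_{\phi}\oplus\CalH_{\psi})$ is strongly Morita equivalent both to $(\pi_{\phi}, \CalH_{\phi})$ and to $(\pi_{\psi}, \CalH_{\psi})$, and these are minimal Stinespring representations for $\phi$ and $\psi$ respectively. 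Hence $\tau$ is strongly Morita equivalent to $\phi$ and to $\psi$.

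The argument presents essentially no obstacle once the earlier results are in place; the only point requiring care is that Definition \ref{Def:DP3} does not depend on the choice of minimal Stinespring representation, so that invoking the particular representation supplied by Lemma \ref{lem:DP5} is legitimate. This well-definedness is guaranteed by the uniqueness statement of Proposition \ref{prop:DP1} together with Lemmas \ref{lem:MR3} and \ref{lem:MR3-2} (exactly as already used in the proof of Proposition \ref{prop:DP4}), so I would cite it rather than re-derive it.
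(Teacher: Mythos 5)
Your proposal is correct and follows essentially the same route as the paper, whose proof simply declares the result immediate from Lemmas \ref{lem:MR4} and \ref{lem:DP5} and the definition of $\tau$; you have merely written out the corner computation and the appeal to those lemmas in full, and your remark that well-definedness of Definition \ref{Def:DP3} is covered by Proposition \ref{prop:DP1} is a sound (and already established) point.
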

\begin{proof} This is immediate by Lemmas \ref{lem:MR4}, \ref{lem:DP5} and the definition of $\tau$.
\end{proof}

Furthermore, we assume that $A$ and $B$ are $\sigma$-unital stable $C^*$-algebras. Then
by the discussions before Lemma \ref {lem:MR4-2} and Lemma \ref{lem:MR4-2}, there are an isomorphism
$\theta$ of $A$ onto $B$ and an isometry $\widetilde{W}$ from $\CalH_{\phi}$ onto $\CalH_{\psi}$ such that
$$
\pi_{\psi}(\theta(a))=\widetilde{W}\pi_{\phi}(a)\widetilde{W}^*
$$
for any $a\in A$. By easy computations, we obtain that
$$
\psi(\theta(a))=V_{\psi}^* \pi_{\psi}(\theta(a))V_{\psi}=V_{\psi}^*\widetilde{W}\pi_{\phi}(a)\widetilde{W}^*V_{\psi} 
$$
for any $a\in A$.

\begin{prop}\label{prop:DP7} Let $A$ and $B$ be $\sigma$-unital stable $C^*$-algebras.
Let $\phi$ and $\psi$ be completely positive linear maps from $C^*$-algebra $A$ and
$B$ to $\BB(\CalH)$ and $\BB(\CalK)$, respectively. Let $(\pi_{\phi}, V_{\phi}, \CalH_{\phi})$ and
$(\pi_{\psi}, V_{\psi}, \CalH_{\psi})$ be minimal Stinespring representations for $\phi$ and $\psi$,
respectively. Then there are an isomorphism $\theta$ of $A$ onto $B$ and an isometry $\widetilde{W}$
from $\CalH_{\phi}$ onto $\CalH_{\psi}$ satisfying that $(\pi_{\phi}, \widetilde{W^*}V_{\psi}, \CalH_{\phi})$
is a minimal Stinespring representation for $\psi\circ\theta$.
\end{prop}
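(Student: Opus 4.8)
The plan is to take the isomorphism $\theta\colon A\to B$ and the isometry $\widetilde{W}$ of $\CalH_\phi$ onto $\CalH_\psi$ produced in the discussion immediately preceding the statement (which rests on the strong Morita equivalence of $\phi$ and $\psi$ together with the $\sigma$-unital stable hypotheses), to set $U:=\widetilde{W}^*V_\psi$, and to verify directly that $(\pi_\phi,U,\CalH_\phi)$ meets the defining conditions of a minimal Stinespring representation for $\psi\circ\theta$. Since $\theta$ is an isomorphism of $C^*$-algebras and $\psi$ is completely positive, $\psi\circ\theta\colon A\to\BB(\CalK)$ is a completely positive linear map, so asking for its Stinespring representation makes sense; moreover $U$ indeed lies in $\BB(\CalK,\CalH_\phi)$ because $V_\psi\colon\CalK\to\CalH_\psi$ and $\widetilde{W}^*\colon\CalH_\psi\to\CalH_\phi$.

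First I would check the Stinespring identity. Recalling from Lemma \ref{lem:MR4-2} that $\widetilde{W}$ is a surjective isometry, hence unitary, with $\pi_\psi(\theta(a))=\widetilde{W}\pi_\phi(a)\widetilde{W}^*$, a one-line substitution gives
$$
U^*\pi_\phi(a)U=V_\psi^*\widetilde{W}\pi_\phi(a)\widetilde{W}^*V_\psi=V_\psi^*\pi_\psi(\theta(a))V_\psi=\psi(\theta(a))
$$
for every $a\in A$, which is exactly $(\psi\circ\theta)(a)$; this is precisely the formula displayed just above the proposition.

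Next I would establish minimality, i.e. $\CalH_\phi=\overline{\pi_\phi(A)U\CalK}$. The device is to rewrite the intertwining relation as $\pi_\phi(a)\widetilde{W}^*=\widetilde{W}^*\pi_\psi(\theta(a))$, so that
$$
\pi_\phi(A)U\CalK=\pi_\phi(A)\widetilde{W}^*V_\psi\CalK=\widetilde{W}^*\pi_\psi(\theta(A))V_\psi\CalK=\widetilde{W}^*\pi_\psi(B)V_\psi\CalK,
$$
where the last step uses that $\theta$ maps $A$ onto $B$. Since $\widetilde{W}^*$ is unitary and $\overline{\pi_\psi(B)V_\psi\CalK}=\CalH_\psi$ by minimality of the Stinespring representation for $\psi$, taking closures yields $\overline{\pi_\phi(A)U\CalK}=\widetilde{W}^*\CalH_\psi=\CalH_\phi$. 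Finally, the norm condition $\|U\|=\|\psi\circ\theta\|$ holds because $\widetilde{W}^*$ is unitary, giving $\|U\|=\|V_\psi\|=\|\psi\|$, while $\theta$ being an isometric bijection gives $\|\psi\circ\theta\|=\|\psi\|$.

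I do not expect any genuine obstacle: once $\theta$ and $\widetilde{W}$ are in hand, everything reduces to substituting the relation $\pi_\psi(\theta(a))=\widetilde{W}\pi_\phi(a)\widetilde{W}^*$. The only step needing a little care is the minimality computation, where one must transport the density statement through the unitary $\widetilde{W}^*$ and invoke the surjectivity of $\theta$ to replace $\pi_\psi(\theta(A))$ by $\pi_\psi(B)$.
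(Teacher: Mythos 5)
Your proposal is correct and follows essentially the same route as the paper: both take $\theta$ and $\widetilde{W}$ from the discussion surrounding Lemma \ref{lem:MR4-2}, obtain the Stinespring identity by substituting $\pi_{\psi}(\theta(a))=\widetilde{W}\pi_{\phi}(a)\widetilde{W}^*$, and prove minimality by transporting the density statement $\overline{\pi_{\psi}(B)V_{\psi}\CalK}=\CalH_{\psi}$ through the unitary $\widetilde{W}^*$. The only cosmetic difference is that you commute $\pi_{\phi}(a)$ past $\widetilde{W}^*$ before taking closures while the paper applies $\widetilde{W}^*$ to the closed subspace afterwards, and you add an (unneeded but harmless) check of the norm condition.
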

\begin{proof} Let $\widetilde{W}$ and $\theta$ be as before Lemma \ref {lem:MR4-2} and in the proof
of Lemma \ref {lem:MR4-2}. Then
it suffices to show that $(\pi_{\phi}, \widetilde{W}^* V_{\psi}, \CalH_{\pi})$
is a minimal Stinespring representation for $\psi\circ\theta$. Since $(\pi_{\psi}, V_{\psi}, \CalH_{\psi})$ is a
minimal Stinespring representation for $\psi$,
$$
\overline{\widetilde{W}\pi_{\phi}(A)\widetilde{W}^* V_{\psi}\CalK}=\overline{\pi_{\psi}(\theta(A))V_{\psi}\CalK}
=\overline{\pi_{\psi}(B)V_{\psi}\CalK}=\CalH_{\psi} .
$$
Since $\widetilde{W}$ is an isometry of $\CalH_{\phi}$ onto $\CalH_{\psi}$,
$$
\CalH_{\phi}=\widetilde{W}^* \CalH_{\psi}
=\widetilde{W}^* (\overline{\widetilde{W}\pi_{\phi}(A)\widetilde{W}^* V_{\psi}\CalK})
=\overline{\pi_{\phi}(A)\widetilde{W}^* V_{\psi}\CalK} .
$$
Thus, $(\pi_{\phi}, \widetilde{W}^* V_{\psi}, \CalH_{\phi})$ is a minimal Stinespring representation
for $\psi\circ\theta$.
\end{proof}

Let $\phi$ be a completely positive linear map from $A$ to $\BB(\CalH)$. We will show that $\phi$ and
$\phi\otimes\id_{\BK}$ are strongly Morita equivalent.

\begin{exam}\label{exam:DP8} With the above notation, $\phi$ and $\phi\otimes\id_{\BK}$ are
strongly Morita equivalent.
\end{exam}
\begin{proof} Let $(\pi_{\phi}, V_{\phi}, \CalH_{\phi})$ be a minimal Stinespring representation for
$\phi$. Then $(\pi_{\phi}^s , V_{\phi}\otimes 1_{\CalH_0}, \CalH_{\phi}^s )$ is a minimal Stinespring
representation for $\phi\otimes\id_{\BK}$, where $1_{\CalH_0}$ is the identity operator
on $\CalH_0$. Indeed,
for any $a\in A$, $k\in\BK$,
$$
(V_{\phi}\otimes 1_{\CalH^0})^* \pi_{\phi}^s (a\otimes k)(V_{\phi}\otimes 1_{\CalH^0})
=V_{\phi}^* \pi_{\phi}(a)V_{\phi}\otimes k
=(\phi\otimes\id_{\BK})(a\otimes k) .
$$
Also, 
$$\pi_{\phi}^s (A^s )(V_{\phi}\otimes 1_{\CalH^0})(\CalH_{\phi}^s )=\pi_{\phi}(A)V_{\phi}\CalH_{\phi}
\otimes\BK\CalH_0 .
$$
Since $\overline{\pi_{\phi}(A)V_{\phi}\CalH_{\phi}}=\CalH_{\phi}$, we can see that
$$
\overline{\pi_{\phi}^s (A^s )(V_{\phi}\otimes 1_{\CalH_0})(\CalH_{\phi}^s )}=\CalH_{\phi}^s .
$$
Since by Example \ref{exam:MR5}, $(\pi_{\phi}, \CalH_{\phi})$ and $(\pi_{\phi}^s , \CalH_{\phi}^s )$ are
strongly Morita equivalent, we obtain the conclusion.
\end{proof}

\section{Relation between strong Morita equivalence for bimodule linear maps
and strong Morita equivalence for completely positive linear maps}
\label{sec:Rel}
Let $A\subset C$ and $B\subset D$ be inclusions of $C^*$-algebras
with $\overline{AC}=C$ and $\overline{BD}=D$. Let $E^A$ and $E^B$ be conditional expectations from
$C$ and $D$ onto $A$ and $B$, respectively. We assume that $E^A$ and $E^B$
are strongly Morita equivalent with respect to a $C-D$-equivalence bimodule $Y$ and its closed subspace $X$
as bimodule linear maps
(See \cite [Definition 3.1]{Kodaka:Picard3}). We note that $A\subset C$ and $B\subset D$
are strongly Morita equivalent with respect to $Y$ and its
closed subspace $X$ (See \cite [Definition 2.1]{KT4:morita}) and that
$E^A$ and $E^B$ are completely positive linear maps from $C$ and $D$ onto $A$ and $B$,
respectively. It is natural that we consider whether $E^A$ and $E^B$ are
strongly Morita equivalent as completely positive linear maps.
\par
In this section, first, we will give the following result: We consider $E^A$ and $E^B$ which are strongly Morita equivalent
as bimodule linear maps.
For any non-degenerate representation $(\pi_B , \CalK_B)$ of $B$, there is a non-degenerate representation
$(\pi_A , \CalH_A )$ of $A$ such that $\pi_A \circ E^A$ and $\pi_B \circ E^B$ are strongly Morita
equivalent as completely positive linear maps. Also, we will consider its inverse direction.
\par
We will use the same notation as above.
We note that $\pi_B \circ E^B$ is a completely positive linear
map from $D$ to $\BB(\CalK_B )$. Hence by \cite [II. 6.9.7]{Blackadar:operator} or
\cite [Theorem 4.1]{Paulsen:CB-maps}, there is a minimal Stinespring representation $(\pi_D , V_D , \CalK_D )$
for $\pi_B \circ E^B$ such that
$$
(\pi_B \circ E^B )(d)=V_D^* \pi_D (d)V_D
$$
for all $d\in D$, where $V_D \in\BB (\CalK_B, \, \CalK_D)$. 
Modifying the proof of \cite [Theorem 4.1]{Paulsen:CB-maps}, we define $\CalK_D$ and $V_D$.
Let $D\odot\CalK_B$ be the algebraic tensor product of $D$ and $\CalK_B$. We define a symmetric bilinear
function $\la \cdot , \cdot \ra$ on $D\odot \CalK_B$ by setting
$$
\la d\otimes \xi \, , \, d_1 \otimes\eta \ra =\la \xi \, , \, (\pi_B \circ E^B)(d^* d_1 )\eta \ra_{\CalK_B}
$$
for any $d, d_1 \in D$, $\xi, \eta\in \CalK_B$ and extending linearly. Let $\CallN_D$ be the subspace of
$D\odot\CalK_B$ defined by
$$
\CallN_D =\{u\in D\odot \CalK_B \, | \, \la u , u \ra=0 \} .
$$
The induced bilinear form on the quotient space $(D\odot\CalK_B )/\CallN_D$ defined by
$$
\la u+\CallN_D \, , \, v+\CallN_D \ra=\la u \, , \, v \ra ,
$$
is an inner product, where $u, v\in D\odot \CalK_D$. We denote by $\CalK_D$
and $\la \cdot \, , \, \cdot \ra_{\CalK_D}$ the Hilbert space,
the completion of the innner product space $(D\odot \CalK_B )/\CallN_D$ and its inner product, respectively.
\par
We define $V_D : \CalK_B \to \CalK_D$ by setting
$$
V_D (\xi )=\lim_{\lambda}(d_{\lambda}\otimes\xi +\CallN_D )
$$
for any $\xi\in \CalK_B$, where $\{d_{\lambda}\}$ is an approximate unit of $D$ with
$0\leq d_{\lambda}$ and $||d_{\lambda}||\leq 1$ and the limit
is taken under the weak topology in $\CalK_D$. Also, we note that $\lim_{\lambda}(d_{\lambda}\otimes\xi+\CallN_D )$
is independent of the choice of an approximate unit of $D$. Furthermore,
$$
V_D^* (d\otimes\xi +\CallN_D )=(\pi_B \circ E^B)(d)\xi
$$
for any $d\in D$, $\xi\in\CalK_B$. Indeed, for any $d\in D$, $\xi, \eta\in \CalK_B$,
\begin{align*}
\la V_D^* (d\otimes\xi+\CallN_D ) \, , \, \eta \ra_{\CalK_B} & =
\la d\otimes \xi +\CallN_D \, , \, \lim_{\lambda}(d_{\lambda}\otimes\eta+\CallN_D ) \ra_{\CalK_D} \\
& =\lim_{\lambda}\la (\pi_B \circ E^B )(d_{\lambda}d)\xi \, , \, \eta \ra_{\CalK_B} \\
& =\la (\pi_B \circ E^B )(d)\xi \, , \, \eta \ra_{\CalK_B} .
\end{align*}
\par
Next, following the proof of \cite [Lemma 2.2]{ER:multiplier}, we define non-degenerate
representations $(\pi_A , \CalH_A )$ and $(\pi_C , \CalH_C )$ of $A$ and $C$, which are strongly Morita
equivalent to $(\pi_B , \CalK_B )$ and $(\pi_D , \CalK_D )$, respectively.
\par
We regard $\CalK_B$ as a Hilbert $B-\BC$-bimodule using the representation $(\pi_B , \CalK_B)$
and let $\CalH_A =X\otimes_B \CalK_B$, a Hilbert space, where its inner product $\la \cdot \, , \, \cdot \ra_{\CalH_A}$
is defined by
$$
\la x\otimes\xi \, , \, y\otimes\eta \ra_{\CalH_A} =\la \xi \, , \, \pi_B (\la x, y \ra_B )\eta \ra_{\CalK_B }
$$
for any $x, y\in X$, $\xi , \eta\in \CalK_B$. We define $\pi_A $ by setting
$$
\pi_A (a)(x\otimes\xi )=(a\cdot x)\otimes\xi
$$
for all $a\in A$, $x\in X$, $\xi\in\CalK_B$. Furthermore, we define a linear map $\pi_X$ from $X$
to $\BB(\CalK_B , \CalH_A)$ by setting
$$
\pi_X (x)=x\otimes\xi
$$
for any $x\in X$, $\xi\in\CalK_B$. By \cite [Lemma 2.2]{ER:multiplier}, $(\pi_A , \pi_X , \pi_B )$
is a representation of $X$ on the pair of Hilbert spaces $(\CalH_A , \CalK_B )$. Thus $(\pi_A ,\CalH_A )$ and
$(\pi_B , \CalK_B )$ are strongly Morita equivalent. Similarly we define a representation of $Y$ on the pair of
Hilbert spaces $(\CalH_C \, , \, \CalK_D )$ as follows:
$$
\CalH_C =Y\otimes_D \CalK_D , \quad
\pi_Y (y)\xi =y\otimes\xi , \quad
\pi_C (c)(y\otimes\xi )=(c\cdot y)\otimes\xi 
$$
for any $c\in C$, $y\in Y$, $\xi\in\CalK_D$. 
\par
We consider $\pi_A \circ E^A$, which is a completely positive linear map from $C$ to $\BB(\CalH_A )$.
By \cite [Theorem 4.1]{Paulsen:CB-maps} or \cite [II. 6.9.7]{Blackadar:operator}, there is a minimal
Stinespring representation $(\pi_C' , V_C' , \CalH_C' )$. We will show that $\CalH_C \cong \CalH_C'$ as
Hilbert spaces. In order to do this, we introduce the following Hilbert space $\CalE$. We regard $Y$ as a
Hilbert $C-B$-bimodule in the following way: We define the left $C$-action, the right $B$-action and the left
$C$-valued inner product in the usual way. We define the right $B$-valued inner product by setting
$$
\la x, y\ra_{B} =E^B (\la x, y \ra_D )
$$
for any $x, y\in Y$. We denote it by the symbol $Y_B$. We define the Hilbert space $\CalE$ by
$$
\CalE=Y_B  \otimes_B \CalK_B
$$
in the same way as the definition of Hilbert space $\CalH_A =X\otimes_B \CalK_B$.
\par
First, we show that $\CalH_C$ is isomorphic to $\CalE$ as Hilbert spaces. Before doing it, we prepare
the following lemma.

\begin{lemma}\label{lem:Rel2} With the above notation, let $\{d_{\lambda}\}_{\lambda\in\Lambda}$
be an approximate unit of $D$ with
$0\leq d_{\lambda}$ and $||d_{\lambda}||\leq 1$ for any $\lambda\in\Lambda$.
For any $y\in Y$, $||y-y\cdot d_{\lambda}||\to 0$
$(\lambda\to\infty)$.
\end{lemma}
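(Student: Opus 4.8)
The plan is to use the standard Hilbert $C^*$-module estimate, recalling that since $Y$ is a $C-D$-equivalence bimodule, its norm is given by $||y||=||\la y, y\ra_D ||^{1/2}$ in terms of the right $D$-valued inner product $\la \cdot\, , \,\cdot\ra_D$. So for a fixed $y\in Y$ I would start from
$$
||y-y\cdot d_{\lambda}||^2 =||\la y-y\cdot d_{\lambda} \, , \, y-y\cdot d_{\lambda}\ra_D || .
$$

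First I would expand the right $D$-valued inner product by sesquilinearity, using the module identities $\la y \, , \, y\cdot d_{\lambda}\ra_D =\la y, y\ra_D \, d_{\lambda}$, $\la y\cdot d_{\lambda} \, , \, y\ra_D =d_{\lambda}\la y, y\ra_D$ (here $d_{\lambda}^* =d_{\lambda}$ since $d_{\lambda}\geq 0$), and $\la y\cdot d_{\lambda}\, , \, y\cdot d_{\lambda}\ra_D =d_{\lambda}\la y, y\ra_D \, d_{\lambda}$. Writing $s=\la y, y\ra_D \in D$ with $s\geq 0$ and regarding everything as an element of $M(D)$, this rewrites the inner product as
$$
s-s\, d_{\lambda}-d_{\lambda}\, s+d_{\lambda}\, s\, d_{\lambda}=(1-d_{\lambda})\, s \,(1-d_{\lambda}) .
$$
Applying the $C^*$-identity to the element $s^{1/2}(1-d_{\lambda})\in D$, I would then obtain
$$
||(1-d_{\lambda})\, s\, (1-d_{\lambda})||=||s^{1/2}(1-d_{\lambda})||^2 =||s^{1/2}-s^{1/2}d_{\lambda}||^2 .
$$

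Finally, since $s^{1/2}\in D$ and $\{d_{\lambda}\}_{\lambda\in\Lambda}$ is an approximate unit of $D$, the quantity $||s^{1/2}-s^{1/2}d_{\lambda}||$ tends to $0$ as $\lambda\to\infty$; combining the displays gives $||y-y\cdot d_{\lambda}||^2 \to 0$, which is the claim. The one step requiring care is the factorization $(1-d_{\lambda})\, s\, (1-d_{\lambda})$ in the multiplier algebra together with the $C^*$-identity, which reduces the stated convergence to the elementary approximate-unit property applied to $s^{1/2}$; the remainder is routine.
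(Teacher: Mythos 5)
Your proof is correct, but it takes a different route from the paper's. The paper first invokes Brown--Mingo--Shen \cite[Proposition 1.7]{BMS:quasi} to get the density $Y=\overline{Y\cdot D}$, approximates $y$ by a finite sum $\sum_{i=1}^n y_i\cdot d_i$, and then runs a three-term triangle-inequality estimate using $\|d_i-d_id_{\lambda}\|\to 0$ for each of the finitely many $d_i$. You instead compute $\|y-y\cdot d_{\lambda}\|^2$ directly from the right $D$-valued inner product, factor the result as $(1-d_{\lambda})\la y,y\ra_D(1-d_{\lambda})$ in the multiplier algebra (valid since $d_{\lambda}=d_{\lambda}^*$), and apply the $C^*$-identity to reduce the claim to $\|s^{1/2}-s^{1/2}d_{\lambda}\|\to 0$ for the single element $s^{1/2}=\la y,y\ra_D^{1/2}\in D$, which is the bare approximate-unit property. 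All steps check out: the module identities you use hold in any right Hilbert $D$-module, $s^{1/2}$ lies in $D$ because $s\ge 0$, and the factorization $[s^{1/2}(1-d_{\lambda})]^*[s^{1/2}(1-d_{\lambda})]=(1-d_{\lambda})s(1-d_{\lambda})$ is exactly what the $C^*$-identity needs. Your argument is the classical self-contained proof of non-degeneracy of the right action on a Hilbert module; it avoids the external citation and gives the sharper quantitative statement $\|y-y\cdot d_{\lambda}\|=\|s^{1/2}-s^{1/2}d_{\lambda}\|$, whereas the paper's approach trades that for a softer density-plus-$\varepsilon/3$ argument that leans on the cited proposition.
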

\begin{proof} Let $y$ be any element in $Y$. For any $\epsilon>0$,
there are $y_1 , y_2 , \dots, y_n \in Y$, $d_1 , d_2 , \dots, d_n \in D$ such that
$$
||y-\sum_{i=1}^n y_i \cdot d_i ||<\epsilon ,
$$
by \cite [Proposition 1.7]{BMS:quasi}.
Then
\begin{align*}
& ||y-y\cdot d_{\lambda}|| \\
& \leq ||y-\sum_{i=1}^n y_i \cdot d_i ||+||\sum_{i=1}^n y_i \cdot d_i -\sum_{i=1}^n y_i \cdot d_i d_{\lambda} ||
+||\sum_{i=1}^n y_i \cdot d_i d_{\lambda}-y\cdot d_{\lambda} || \\
& <2\epsilon+||\sum_{i=1}^n y_i \cdot (d_i -d_i d_{\lambda})|| .
\end{align*}
Since $||d_i -d_i d_{\lambda}||\to 0$ $(\lambda\to\infty)$ for $i=1,2, \dots, n$, there is a $\lambda_0 \in\Lambda$
such that
$$
||\sum_{i=1}^n y_i \cdot (d_i -d_i d_{\lambda})||<\epsilon
$$
for any $\lambda\geq \lambda_0$. Thus $||y-y\cdot d_{\lambda}||\to 0$ $(\lambda\to\infty)$.
\end{proof}

\begin{lemma}\label{lem:Rel3} With the above notation, $\CalH_C \cong \CalE$ as Hilbert spaces.
\end{lemma}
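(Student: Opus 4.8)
The plan is to build an explicit unitary $\Phi : \CalH_C \to \CalE$ directly from the two tensor-product constructions. A dense subspace of $\CalH_C = Y\otimes_D \CalK_D$ is spanned by the vectors $y\otimes(d\otimes\xi+\CallN_D)$ with $y\in Y$, $d\in D$, $\xi\in\CalK_B$, since the elements $d\otimes\xi+\CallN_D$ span a dense subspace of $\CalK_D$. I would define $\Phi$ on these generators by
\[
\Phi\bigl(y\otimes(d\otimes\xi+\CallN_D)\bigr)=(y\cdot d)\otimes\xi ,
\]
the right-hand side living in $\CalE = Y_B\otimes_B\CalK_B$. The cleanest route to well-definedness is to regard the assignment $(y,d,\xi)\mapsto(y\cdot d)\otimes\xi$ as a trilinear map $\Phi_0$ on the algebraic tensor product $Y\odot D\odot\CalK_B$, which is manifestly well defined since the $B$-balancing is already built into $\CalE$; I would then compare it with the natural linear surjection $q:Y\odot D\odot\CalK_B\to\CalH_C$, $(y,d,\xi)\mapsto y\otimes(d\otimes\xi+\CallN_D)$, and use the inner-product identity below to force $\Phi_0$ to factor through $q$.

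The key computation is to check that $\Phi$ preserves inner products on the generators. On the $\CalH_C$ side, using that $\pi_D$ acts on $\CalK_D$ by $\pi_D(d')(d\otimes\xi+\CallN_D)=d'd\otimes\xi+\CallN_D$ together with $\la y_1\cdot d_1, y_2\cdot d_2\ra_D = d_1^* \la y_1, y_2\ra_D d_2$, one obtains
\[
\la y_1\otimes(d_1\otimes\xi_1+\CallN_D)\, , \, y_2\otimes(d_2\otimes\xi_2+\CallN_D)\ra_{\CalH_C}
=\la\xi_1\, , \, (\pi_B\circ E^B)(d_1^* \la y_1, y_2\ra_D d_2)\xi_2\ra_{\CalK_B} .
\]
On the $\CalE$ side, using the right $B$-valued inner product $\la u, v\ra_B = E^B(\la u, v\ra_D)$ on $Y_B$, one obtains
\[
\la (y_1\cdot d_1)\otimes\xi_1\, , \, (y_2\cdot d_2)\otimes\xi_2\ra_{\CalE}
=\la\xi_1\, , \, \pi_B(E^B(d_1^* \la y_1, y_2\ra_D d_2))\xi_2\ra_{\CalK_B} .
\]
The two right-hand sides coincide, so $\Phi$ is isometric on a dense domain and extends to an isometry $\CalH_C\to\CalE$; well-definedness is then automatic, since $\la q(w), q(w)\ra_{\CalH_C}=\la\Phi_0(w),\Phi_0(w)\ra_{\CalE}$ shows $q(w)=0$ implies $\Phi_0(w)=0$, i.e. $\ker q\subseteq\ker\Phi_0$.

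It remains to see that $\Phi$ has dense range, so that it is a surjective isometry and hence a Hilbert space isomorphism. The range of $\Phi$ is the span of the vectors $(y\cdot d)\otimes\xi$, so I must show these are dense in $\CalE$. Here Lemma \ref{lem:Rel2} does the work: it gives $y\cdot d_\lambda\to y$ in the norm of $Y$, so the span of $\{y\cdot d\}$ is dense in $Y$. Since $E^B$ is norm-decreasing one has $\|u\|_{Y_B}=\|E^B(\la u, u\ra_D)\|^{1/2}\leq\|u\|_Y$, hence this span is also dense in $Y_B$; combined with the estimate $\|u\otimes\xi\|_{\CalE}\leq\|u\|_{Y_B}\|\xi\|_{\CalK_B}$ this makes the vectors $(y\cdot d)\otimes\xi$ dense in $\CalE$. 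I expect the only genuinely delicate point to be this interplay between the two norms on $Y$, namely the equivalence-bimodule norm and the weaker norm coming from $E^B$; the inner-product matching itself is a direct, if slightly lengthy, computation.
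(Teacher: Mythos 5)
Your proposal is correct and follows essentially the same route as the paper: the same map $\Phi(y\otimes(d\otimes\xi))=(y\cdot d)\otimes\xi$, the same inner-product computation reducing both sides to $\la\xi_1,(\pi_B\circ E^B)(d_1^*\la y_1,y_2\ra_D d_2)\xi_2\ra_{\CalK_B}$, and Lemma \ref{lem:Rel2} for surjectivity. The only cosmetic difference is in the last step: the paper produces an explicit preimage of each $y\otimes\xi$ as the limit of the Cauchy net $y\otimes(d_\lambda\otimes\xi)$, whereas you observe that the range of an isometry is closed and check density of the vectors $(y\cdot d)\otimes\xi$ via the norm inequality $\|u\|_{Y_B}\le\|u\|_Y$; both are valid.
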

\begin{proof} Let $\Phi$ be the linear map from $Y\odot \CalK_D$ to
$\CalE$ defined by
$$
\Phi(y\otimes (d\otimes \xi))=(y\cdot d )\otimes\xi
$$
for any $y\in Y$, $d\in D$, $\xi\in \CalK_B$, where $Y\odot\CalK_D$ is
the algebraic tensor product of $Y$ and $\CalK_D$. Let $y, y_1 \in Y$, $d, d_1 \in D$, $\xi , \xi_1 \in \CalK_B$.
Then
\begin{align*}
\la y\otimes(d\otimes\xi)\, , \, y_1 \otimes (d_1 \otimes \xi_1 ) \ra_{\CalH_C } & =
\la d\otimes\xi \, , \, \pi_D (\la y \, , \, y_1 \ra_D )d_1\otimes\xi_1 \ra_{\CalK_D } \\
& =\la d\otimes\xi \, , \, \la y \, , \, y_1 \ra_D d_1 \otimes \xi_1 \ra_{\CalK_D} \\
& =\la \xi \, , \, (\pi_B \circ E^B )(d^* \la y \, , \, y_1 \ra_D d_1 )\xi_1 \ra_{\CalK_B} .
\end{align*}
On the other hand,
\begin{align*}
\la \Phi(y\otimes(d\otimes\xi)) \, , \, \Phi(y_1 \otimes(d_1 \otimes\xi_1 ))\ra_{\CalE} & =
\la (y\cdot d)\otimes\xi \, , \, (y_1 \cdot d_1 )\otimes\xi_1 \ra_{\CalE} \\
& =\la \xi , \, \pi_B (\la y\cdot d \, , \, y_1 \cdot d_1 \ra_B )\xi_1 \ra_{\CalK_B} \\
& =\la \xi \, , \, (\pi_B \circ E^B )(\la y\cdot d \, , \, y_1 \cdot d_1 \ra_D )\xi_1 \ra_{\CalK_B} \\
& =\la \xi \, , \, (\pi_B \circ E^B )(d^* \la y \, , \, y_1 \ra_D d_1 )\xi_1 \ra_{\CalK_B} .
\end{align*}
Thus $\Phi$ preserves the inner products on the algebraic tensor products. We can extend $\Phi$ to
$\CalH_C$. We denote it by the same symbol $\Phi$. Then $\Phi$ is an
isometry from $\CalH_C$ to $\CalE$. Next, we show that $\Phi$ is surjective.
Let $y$ and $\xi$ be elements in $Y_B$ and $\CalK_B$, respectively.
Let $\{d_{\lambda}\}_{\lambda\in\Lambda}$ be an approximate units of $D$ with
$d_{\lambda}\geq 0$ and $||d_{\lambda}||\leq 1$.
Then by Lemma \ref {lem:Rel2}, $y=\lim_{\lambda}y\cdot d_{\lambda}$. Also,
$y\otimes (d_{\lambda}\otimes\xi)$ is an element in $Y\otimes_D \CalK_D$ and
$$
\Phi(y\otimes(d_{\lambda}\otimes\xi))=y\cdot d_{\lambda}\otimes\xi \to y\otimes\xi \quad
(\lambda\to\infty) .
$$
Since $\Phi$ is isometric, $\{y\otimes(d_{\lambda}\otimes\xi )\}_{\lambda\in\Lambda}$ is a Cauchy
net in $Y\otimes_D \CalK_D$. Hence there exists an element $z\in \CalH_C$ such that
$y\otimes(d_{\lambda}\otimes\xi)\to z$
$(\lambda\to\infty)$. Thus $\Phi(z)=y\otimes \xi$, that is, $\Phi$ is surjective. Therefore $\Phi$ is an isometry
from $\CalH_C$ onto $\CalE$.
\end{proof}

By the proof of Lemma \ref {lem:Rel3}
$$
\Phi^* (y\otimes\xi)=\lim_{\lambda}y\otimes(d_{\lambda}\otimes\xi)
$$
for any $y\in Y$, $\xi\in \CalK_B$, where $\{d_{\lambda}\}_{\lambda\in\Lambda}$ is an
approximate units of $D$ with $0\leq d_{\lambda}$ and $||d_{\lambda}||\leq 1$ for any $\lambda\in\Lambda$.
\par
Next, we will show that $\CalH_C ' \cong \CalE$ as Hilbert spaces.

\begin{lemma}\label{lem:Rel1} With the above notation $\CalH_C' \cong \CalE$ as Hilbert spaces.
\end{lemma}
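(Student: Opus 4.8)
The plan is to realize a minimal Stinespring representation for $\pi_A\circ E^A$ directly on the Hilbert space $\CalE$ and then to read off the isomorphism $\CalH_C'\cong\CalE$ from the uniqueness of minimal Stinespring representations (Proposition \ref{prop:DP1}). Recall that $\CalE=Y_B\otimes_B\CalK_B$ while $\CalH_A=X\otimes_B\CalK_B$, and that $X$ is a closed subspace of $Y$. I would first define a representation $\sigma$ of $C$ on $\CalE$ by $\sigma(c)(y\otimes\xi)=(c\cdot y)\otimes\xi$ for $c\in C$, $y\in Y$, $\xi\in\CalK_B$; since $E^B$ is positive and contractive, $\la c\cdot y, c\cdot y\ra_B=E^B(\la c\cdot y, c\cdot y\ra_D)\leq\|c\|^2 E^B(\la y, y\ra_D)$, so $\sigma(c)$ is well defined and bounded, it is plainly a $*$-representation, and it is non-degenerate because $\overline{C\cdot Y}=Y$. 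Next I would introduce $V\colon\CalH_A\to\CalE$ induced by the inclusion $X\hookrightarrow Y_B$, that is, $V(x\otimes\xi)=x\otimes\xi$.

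The first routine point is that $V$ is an isometry: this rests on the identity $\la x, x_1\ra_B=E^B(\la x, x_1\ra_D)$ for $x, x_1\in X$, i.e.\ that the $A$--$B$-equivalence bimodule inner product of $X$ agrees with the $B$-valued inner product of $Y_B$ restricted to $X$. This compatibility is part of the definition of strong Morita equivalence of the conditional expectations $E^A$ and $E^B$ as bounded $C^*$-bimodule linear maps (\cite[Definition 3.1]{Kodaka:Picard3}); granting it, $V$ preserves inner products on elementary tensors and extends to an isometry of $\CalH_A$ into $\CalE$.

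The substance of the proof is the Stinespring identity $V^*\sigma(c)V=\pi_A(E^A(c))$ for $c\in C$. Pairing against $x\otimes\xi$ and $x_1\otimes\xi_1$ and expanding the inner products of $\CalE$ and $\CalH_A$, both sides become inner products in $\CalK_B$ of the form $\la\xi,\pi_B(\,\cdot\,)\xi_1\ra_{\CalK_B}$, and the identity reduces to the compatibility
\begin{equation*}
E^B(\la x, c\cdot x_1\ra_D)=E^B(\la x, E^A(c)\cdot x_1\ra_D)\qquad(c\in C,\ x, x_1\in X).
\end{equation*}
I expect this to be the main obstacle. It says that $E^B$ annihilates $\la x,(c-E^A(c))\cdot x_1\ra_D$, and it is exactly here that the hypothesis that $E^A$ and $E^B$---not merely the inclusions $A\subset C$, $B\subset D$---are strongly Morita equivalent is used; I would derive it from the intertwining between $E^A$, $E^B$ and the module actions of $X\subset Y$ furnished by \cite[Definition 3.1]{Kodaka:Picard3}. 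Everything else in this step is a mechanical transcription of the two inner products.

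Finally I would check minimality, namely $\overline{\sigma(C)V\CalH_A}=\CalE$. The set $\sigma(C)V\CalH_A$ contains every $(c\cdot x)\otimes\xi$ with $c\in C$, $x\in X$, $\xi\in\CalK_B$; since $\overline{C\cdot X}=Y$ by the strong Morita equivalence of the inclusions (\cite[Definition 2.1]{KT4:morita}) and convergence in the $Y$-norm dominates convergence in the $Y_B$-norm (as $E^B$ is contractive), the closed span of these vectors is all of $Y_B\otimes_B\CalK_B=\CalE$; here I would argue as in Lemma \ref{lem:Rel2} and Lemma \ref{lem:Rel3}. Thus $(\sigma, V,\CalE)$ is a minimal Stinespring representation for $\pi_A\circ E^A$, and by the uniqueness in Proposition \ref{prop:DP1} there is an isometry of $\CalH_C'$ onto $\CalE$, giving $\CalH_C'\cong\CalE$ as Hilbert spaces.
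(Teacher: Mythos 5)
Your proof is correct, and it reaches the conclusion by a genuinely different (though closely related) route from the paper. The paper works with the concrete model of $\CalH_C'$ as the completion of $C\odot\CalH_A$ and writes down the explicit map $\Psi(c\otimes(x\otimes\xi))=(c\cdot x)\otimes\xi$ into $\CalE$, checking directly that it preserves inner products and is surjective; you instead build a second minimal Stinespring triple $(\sigma,V,\CalE)$ for $\pi_A\circ E^A$ and invoke the uniqueness statement, Proposition \ref{prop:DP1}. The two verifications are computationally the same: your isometry claim for $V$ plus the identity $V^*\sigma(c)V=\pi_A(E^A(c))$ amounts to exactly the inner-product computation the paper performs for $\Psi$, and both hinge on the same key relation $\la x, E^A(c)\cdot x_1\ra_B=E^B(\la x, c\cdot x_1\ra_D)$ supplied by the strong Morita equivalence of $E^A$ and $E^B$ as bimodule maps (the paper quotes it in precisely this form), while your minimality check $\overline{\sigma(C)V\CalH_A}=\CalE$ is the paper's surjectivity argument via $Y=\overline{C\cdot X}$ and the contractivity of $E^B$. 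What your packaging buys is that you never have to manipulate equivalence classes in the abstract completion $\CalH_C'$; what it costs is explicitness --- the paper's concrete $\Psi$ is reused in Lemma \ref{lem:Rel4} to compute $\Phi^*\Psi$ and intertwine $\pi_C$ with $\pi_C'$, whereas from Proposition \ref{prop:DP1} you would get a unitary intertwining $\pi_C'$ with $\sigma$ and carrying $V_C'$ to $V$, which serves the same purpose but would require restating the downstream arguments in terms of $\sigma$. One small point worth making explicit in your write-up: the identification $\la x,x_1\ra_B=E^B(\la x,x_1\ra_D)$ for $x,x_1\in X$ that makes $V$ isometric follows because $\la X,X\ra_D\subset B$ (from the strong Morita equivalence of the inclusions) together with $E^B|_B=\id_B$.
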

\begin{proof} Let $c\in C$, $x\in X$, $\xi\in \CalK_B$. We denote by the same notation
$c\otimes (x\otimes\xi)$, the equivalence class of $c\otimes(x\otimes\xi)\in C\odot\CalH_A$,
where $C\odot \CalH_A$ is the algebraic tensor product of $C$ and $\CalH_A$.
Let $\Psi$ be the linear map from $C\odot\CalH_A$ to $\CalE$
defined by
$$
\Psi(c\otimes(x\otimes\xi))=(c\cdot x)\otimes\xi
$$
for any $c\in C$, $x\in X$, $\xi\in\CalK_B$, where we note that $X$ is a closed subspace of $Y$.
Let $c, c_1 \in C$, $x, x_1 \in X$, $\xi, \xi_1 \in \CalK_B$. Then
\begin{align*}
\la c\otimes(x\otimes\xi) \, , \, c_1 \otimes(x_1 \otimes\xi_1 ) \ra_{\CalH_C'} & =
\la x\otimes\xi \, , \, (\pi_A \circ E^A )(c^* c_1 )(x_1 \otimes\xi_1 ) \ra_{\CalH_A} \\
& =\la x\otimes\xi \, , \, [E^A (c^* c_1 )\cdot x_1 ]\otimes\xi_1 \ra_{\CalH_A} \\
& =\la \xi \, , \, \pi_B (\la x \, , \, E^A (c^* c_1 )\cdot x_1 \ra_B )\, \xi_1 \ra_{\CalK_B} .
\end{align*}
Since $E^A$ and $E^B$ are strongly Morita equivalent with respect to $Y$ and its closed subspace
$X$ as bimodule linear maps, we have the equation
$$
\la z \, , \, E^A (a)\cdot z_1 \ra_B =E^B(\la z \, , \, a\cdot z_1 \ra_D )
$$
for any $z, z_1 \in X$, $a\in C$. Hence we obtain that
$$
\la c\otimes(x\otimes\xi) \, , \, c_1 \otimes(x_1 \otimes\xi_1 )\ra_{\CalH_C'}
=\la \xi \, , \, (\pi_B \circ E^B)(\la x \, , \, c^* c_1 \cdot x_1 \ra_D)\xi_1 \ra_{\CalK_B} .
$$
On the other hand,
\begin{align*}
\la \Psi(c\otimes(x\otimes\xi)) \, , \, \Psi(c_1 \otimes(x_1 \otimes\xi_1 ))\ra_{\CalE} & =
\la (c\cdot x)\otimes\xi \, , \, (c_1 \cdot x_1 )\otimes\xi_1 \ra_{\CalE} \\
& =\la \xi \, , \, (\pi_B \circ E^B )(\la c\cdot x \ , \, c_1 \cdot x_1 \ra_D )\, \xi_1 \ra_{\CalK_B} .
\end{align*}
Thus $\Psi$ preserves the inner products on the algebraic tensor products. We can extend $\Psi$
to $\CalH_C '$. We denote it by the same symbol $\Psi$.
Then $\Psi$ is an isometry from $\CalH_C '$ to $\CalE$. We show that $\Psi$ is
surjective. Let $y$ and $\xi$ be any elements in $Y_B$ and $\CalK_B$, respectively.
Brown, Mingo and Shen \cite [Proposition 1.7]{BMS:quasi}, $Y=\overline{Y\cdot D}$. Hence
\cite [Definition 2.1]{KT4:morita}
$$
Y=\overline{Y\cdot D}=\overline{Y\cdot \la Y, X\ra_D }=\overline{{}_C \la Y, Y \ra\cdot X}=\overline{C\cdot X} .
$$
For any $m\in\BN$, there are elements $c_1 , c_2 , \dots, c_{n_m}\in C$ and $x_1 , x_2 \dots, x_{n_m}\in X$ such that
$$
||y-\sum_{i=1}^{n_m} c_i \cdot x_i ||<\frac{1}{m} .
$$
Let $z_m =\sum_{i=1}^{n_m}c_i \otimes(x_i \otimes\xi)$. Then $z_m\in \CalH_C '$
for any $m\in\BN$ and $\Psi(z_m )=\sum_{i=1}^{n_m }(c_i \cdot x_i )\otimes \xi\to y\otimes \xi$ $(m\to\infty)$.
Since $\Psi$ is isometric, $\{z_m \}$ is a Cauchy sequence in $\CalH_C '$. Thus there exists
an element $z\in \CalH_C '$ such that $z_m \to z$ $(m\to\infty)$.
Hence $\Psi(z)=y$, that is, $\Psi$ is surjective. Therefore, $\Psi$ is an isometry from $\CalH_C '$
onto $\CalE$.
\end{proof}

\begin{lemma}\label{lem:Rel4} With the above notation, the non-degenerate representations
$(\pi_C , \CalH_C )$ and $(\pi_C ' \, , \, \CalH_C ')$ are unitarily equivalent.
\end{lemma}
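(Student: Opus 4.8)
The plan is to glue the two Hilbert-space isomorphisms already produced in Lemmas \ref{lem:Rel3} and \ref{lem:Rel1} into a single unitary and then verify that it intertwines the two representations. Since $\Phi\colon\CalH_C\to\CalE$ and $\Psi\colon\CalH_C'\to\CalE$ are both surjective isometries (hence unitaries with $\Phi^*=\Phi^{-1}$ and $\Psi^*=\Psi^{-1}$), the operator $U:=\Psi^*\Phi$ is a unitary from $\CalH_C$ onto $\CalH_C'$. It then remains only to check that $U\pi_C(c)=\pi_C'(c)U$ for every $c\in C$.

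To compare the two representations, I would transport both of them to $\CalE$ through a common left $C$-action. Recall that $\CalE=Y_B\otimes_B\CalK_B$ is constructed from the Hilbert $C$-$B$-bimodule $Y_B$ exactly as $\CalH_A=X\otimes_B\CalK_B$ was constructed with its representation $\pi_A$; hence by \cite[Lemma 2.2]{ER:multiplier} there is a non-degenerate representation $\sigma$ of $C$ on $\CalE$ determined by $\sigma(c)(z\otimes\xi)=(c\cdot z)\otimes\xi$ for $z\in Y_B$ and $\xi\in\CalK_B$. I claim that $\Phi\pi_C(c)=\sigma(c)\Phi$ and $\Psi\pi_C'(c)=\sigma(c)\Psi$. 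For the first identity, evaluate on an elementary tensor and use the formula for $\Phi$ together with associativity of the module actions: $\Phi\pi_C(c)(y\otimes(d\otimes\xi))=\Phi((c\cdot y)\otimes(d\otimes\xi))=((c\cdot y)\cdot d)\otimes\xi=(c\cdot(y\cdot d))\otimes\xi=\sigma(c)\Phi(y\otimes(d\otimes\xi))$, and density of such tensors gives the operator identity. The second identity is verified the same way, using that in the minimal Stinespring space $\pi_C'$ is left multiplication on the $C$-tensor leg: $\Psi\pi_C'(c)(c'\otimes(x\otimes\xi))=\Psi((cc')\otimes(x\otimes\xi))=((cc')\cdot x)\otimes\xi=(c\cdot(c'\cdot x))\otimes\xi=\sigma(c)\Psi(c'\otimes(x\otimes\xi))$.

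Combining the two claims finishes the argument. From $\Psi\pi_C'(c)=\sigma(c)\Psi$ and $\Psi\Psi^*=\Psi^*\Psi=\id$ one gets $\Psi^*\sigma(c)=\pi_C'(c)\Psi^*$, and therefore $U\pi_C(c)=\Psi^*\Phi\pi_C(c)=\Psi^*\sigma(c)\Phi=\pi_C'(c)\Psi^*\Phi=\pi_C'(c)U$, as desired. The only points requiring genuine care are the well-definedness and boundedness of the auxiliary representation $\sigma$ on $\CalE$, which follow verbatim from the construction of $(\pi_A,\CalH_A)$ via \cite[Lemma 2.2]{ER:multiplier}, and the bookkeeping of the left and right module actions so that the associativity identities line up correctly; the remaining verifications are the routine computations on elementary tensors indicated above.
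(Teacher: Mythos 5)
Your proposal is correct, and the underlying strategy is the same as the paper's: the intertwining unitary is the composite of the two surjective isometries from Lemmas \ref{lem:Rel3} and \ref{lem:Rel1} (you take $U=\Psi^*\Phi:\CalH_C\to\CalH_C'$, the paper takes $\Phi^*\Psi:\CalH_C'\to\CalH_C$; the difference is only one of direction). Where you diverge is in how the intertwining is verified. The paper computes $\Phi^*\Psi$ directly on elementary tensors $c\otimes(x\otimes\xi)$ of $\CalH_C'$, using the explicit formula $\Phi^*(y\otimes\xi)=\lim_{\lambda}y\otimes(d_{\lambda}\otimes\xi)$ extracted from the proof of Lemma \ref{lem:Rel3}, and checks that $\pi_C(c_1)\Phi^*\Psi$ and $\Phi^*\Psi\pi_C'(c_1)$ both send $c\otimes(x\otimes\xi)$ to $\lim_{\lambda}(c_1c\cdot x)\otimes(d_{\lambda}\otimes\xi)$. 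You instead interpose the induced representation $\sigma$ of $C$ on $\CalE=Y_B\otimes_B\CalK_B$ and show separately that $\Phi$ and $\Psi$ each intertwine into $(\sigma,\CalE)$; this trades the approximate-unit limit computation for the (easy but necessary) verification that $\sigma$ is a well-defined bounded representation, which follows because $\la c\cdot z,c\cdot z\ra_B=E^B(\la c\cdot z,c\cdot z\ra_D)\le\|c\|^2E^B(\la z,z\ra_D)$ by positivity of $E^B$, so that \cite[Lemma 2.2]{ER:multiplier} applies to the Hilbert $C$--$B$-bimodule $Y_B$. Your version is arguably tidier since it works only with the isometries themselves on elementary tensors and never needs the formula for $\Phi^*$; the paper's version avoids introducing a third representation. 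Both are complete proofs.
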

\begin{proof} Let $c\in C$, $x\in X$ and $\xi\in\CalK_B$. Then
$$
(\Phi^* \Psi)(c\otimes (x\otimes \xi))=\Phi^* ((c\cdot x)\otimes\xi)
=\lim_{\lambda}(c\cdot x)\otimes (d_{\lambda}\otimes\xi) ,
$$
where $\{d_{\lambda}\}_{\lambda\in\Lambda}$ is an approximate unit of $D$ with
$d_{\lambda}\geq 0$ and $||d_{\lambda}||\leq 1$ for any $\lambda\in\Lambda$.
Thus for any $c_1 \in C$,
$$
(\Phi^* \Psi\pi_C '(c_1 ))(c\otimes(x\otimes\xi))=\lim_{\lambda}(c_1 c\cdot x)\otimes(d_{\lambda}\otimes\xi) .
$$
On the other hand,
$$
(\pi_C (c_1 )\Phi^* \Psi)(c\otimes(x\otimes\xi))=\lim_{\lambda}\pi_C (c_1 )((c\cdot x)\otimes(d_{\lambda}\otimes\xi))
=\lim_{\lambda}(c_1 c \cdot x)\otimes (d_{\lambda}\otimes\xi) .
$$
Hence $\pi_C (c_1 )\Phi^* \Psi=\Phi^* \Psi \pi_C ' (c_1 )$ for any $c_1 \in C$.
Therefore, we obtain the conclusion.
\end{proof}

Let $U=\Phi^* \Psi$. Then $U$ is an isometry from $\CalH_C '$ onto $\CalH_C$ and $\pi_C (c)=U\pi_C ' (c)U^*$
for any $c\in C$.

\begin{lemma}\label{lem:Rel5} With the above notation, $(\pi_C \, , \, UV_C ' \, , \, \CalH_C )$ is a minimal
Stinespring representation for $\pi_A \circ E^A$.
\end{lemma}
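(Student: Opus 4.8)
The plan is to transport the minimal Stinespring representation $(\pi_C', V_C', \CalH_C')$ for $\pi_A \circ E^A$ across the surjective isometry $U = \Phi^* \Psi$ produced in Lemma \ref{lem:Rel4}. The essential content has already been established there: $U$ is an isometry from $\CalH_C'$ onto $\CalH_C$ with $\pi_C(c) = U \pi_C'(c) U^*$ for all $c \in C$, whence $U^* U = \id_{\CalH_C'}$, $U U^* = \id_{\CalH_C}$, and in particular $U^* \pi_C(c) U = \pi_C'(c)$. Since $V_C' \in \BB(\CalH_A, \CalH_C')$, the operator $U V_C'$ lies in $\BB(\CalH_A, \CalH_C)$, so it remains only to check the two defining conditions of a minimal Stinespring representation.

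First I would verify the dilation identity. For any $c \in C$,
$$
(U V_C')^* \pi_C(c)(U V_C') = (V_C')^* \bigl(U^* \pi_C(c) U\bigr) V_C' = (V_C')^* \pi_C'(c) V_C' = (\pi_A \circ E^A)(c),
$$
where the second equality uses the intertwining relation above and the third is the Stinespring identity for $(\pi_C', V_C', \CalH_C')$. Next I would verify minimality. From $\pi_C(c) U = U \pi_C'(c)$ we get $\pi_C(C)(U V_C')\CalH_A = U\,\pi_C'(C) V_C' \CalH_A$, and since $U$ is a surjective isometry and $\overline{\pi_C'(C) V_C' \CalH_A} = \CalH_C'$ by minimality of $(\pi_C', V_C', \CalH_C')$,
$$
\overline{\pi_C(C)(U V_C')\CalH_A} = U\,\overline{\pi_C'(C) V_C' \CalH_A} = U \CalH_C' = \CalH_C.
$$

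These two checks together give the conclusion. There is no genuine obstacle at this stage: the argument is a routine transport of structure along a unitary, and the only care needed is in handling the closures together with the identities $U^* U = \id_{\CalH_C'}$ and $U U^* = \id_{\CalH_C}$, so that conjugation by $U$ really carries $(\pi_C', V_C', \CalH_C')$ to $(\pi_C, U V_C', \CalH_C)$. All the substantive work lies upstream, in identifying $\CalH_C \cong \CalE \cong \CalH_C'$ (Lemmas \ref{lem:Rel3} and \ref{lem:Rel1}) and in showing that the resulting map $U$ intertwines $\pi_C'$ and $\pi_C$ (Lemma \ref{lem:Rel4}).
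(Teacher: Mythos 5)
Your proposal is correct and follows essentially the same route as the paper: both verify the dilation identity $(UV_C')^*\pi_C(c)UV_C'=(\pi_A\circ E^A)(c)$ using the intertwining relation from Lemma \ref{lem:Rel4}, and both establish minimality via $\overline{\pi_C(C)UV_C'\CalH_A}=U\overline{\pi_C'(C)V_C'\CalH_A}=U\CalH_C'=\CalH_C$. The extra remarks you include about $U^*U$ and $UU^*$ are harmless elaborations of the same argument.
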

\begin{proof} For any $c\in C$,
$$
(UV_C ' )^* \pi_C (c)UV_C ' =V_C ^{' * } U^* \pi_C (c)UV_C ' 
=V_C^{' *}\pi_C ' (c)V_C ' =(\pi_A \circ E^A )(c) .
$$
Also,
$$
\overline{\pi_C (C)UV_C ' \CalH_A }=U\overline{\pi_C ' (C)V_C ' \CalH_A } =U\CalH_C ' =\CalH_C .
$$
Hence $(\pi_C \, , \, UV_C ' \, , \, \CalH_C )$ is a minimal
Stinespring representation for $\pi_A \circ E^A$.
\end{proof}

\begin{lemma}\label{lem:Rel6} With the above notation, $\pi_A \circ E^A$ and $\pi_B \circ E^B$ are
strongly Morita equivalent as completely positive linear maps.
\end{lemma}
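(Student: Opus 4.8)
The plan is to unwind Definition~\ref{Def:DP3}: to prove that $\pi_A\circ E^A$ and $\pi_B\circ E^B$ are strongly Morita equivalent as completely positive linear maps, it suffices to exhibit, for each of them, a minimal Stinespring representation whose underlying non-degenerate representation of $C$ (respectively $D$) are strongly Morita equivalent in the sense of Definition~\ref{Def:MR2}. All the ingredients have in fact already been assembled in the preceding lemmas, so the argument amounts to collecting them.

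First I would record the two minimal Stinespring representations. On the $D$-side, $(\pi_D, V_D, \CalK_D)$ is by construction a minimal Stinespring representation for $\pi_B\circ E^B$, so its underlying non-degenerate representation is $(\pi_D, \CalK_D)$. On the $C$-side, Lemma~\ref{lem:Rel5} furnishes the minimal Stinespring representation $(\pi_C, UV_C', \CalH_C)$ for $\pi_A\circ E^A$, with underlying non-degenerate representation $(\pi_C, \CalH_C)$; this is precisely where the earlier identifications pay off, since $\CalH_C$ was obtained from $\CalH_C'$ by transporting the Stinespring data along the unitary $U=\Phi^*\Psi$ produced in Lemmas~\ref{lem:Rel1}--\ref{lem:Rel4}.

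Next I would invoke the representation $(\pi_C, \pi_Y, \pi_D)$ of the $C-D$-equivalence bimodule $Y$ on the pair $(\CalH_C, \CalK_D)$ constructed earlier, where $\CalH_C=Y\otimes_D \CalK_D$, $\pi_Y(y)\xi=y\otimes\xi$ and $\pi_C(c)(y\otimes\xi)=(c\cdot y)\otimes\xi$. By Definition~\ref{Def:MR2} the existence of this representation is exactly the assertion that the non-degenerate representations $(\pi_C, \CalH_C)$ and $(\pi_D, \CalK_D)$ of $C$ and $D$ are strongly Morita equivalent.

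Combining the previous two paragraphs with Definition~\ref{Def:DP3} yields that $\pi_A\circ E^A$ and $\pi_B\circ E^B$ are strongly Morita equivalent. The only point needing a word is that Definition~\ref{Def:DP3} speaks of \emph{a} minimal Stinespring representation, whereas we have fixed particular choices; this causes no difficulty, since a minimal Stinespring representation is unique up to unitary equivalence by Proposition~\ref{prop:DP1}, unitary equivalence implies strong Morita equivalence by Lemma~\ref{lem:MR3-2}, and strong Morita equivalence is an equivalence relation by Lemma~\ref{lem:MR3}, so the conclusion is independent of the choices. I therefore do not expect a genuine obstacle here: the substantive work was carried out in the identification $\CalH_C'\cong\CalH_C\cong\CalE$ and in Lemma~\ref{lem:Rel5}, and the present statement simply harvests those results.
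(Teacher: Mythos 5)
Your proposal is correct and follows essentially the same route as the paper: the paper's proof likewise combines the fact that $(\pi_C,\CalH_C)$ and $(\pi_D,\CalK_D)$ are strongly Morita equivalent by construction (via the representation of $Y$) with Lemma~\ref{lem:Rel5} identifying $(\pi_C, UV_C', \CalH_C)$ as a minimal Stinespring representation for $\pi_A\circ E^A$, and then appeals to Definition~\ref{Def:DP3}. Your extra remark on the independence of the choice of minimal Stinespring representation is a harmless (and correct) elaboration of what the paper leaves implicit.
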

\begin{proof} By the definition of $(\pi_C , \CalH_C )$, $(\pi_C , \CalH_C )$ and $(\pi_D , \CalK_D )$ are
strongly Morita equivalent. Also, since $(\pi_C \, , \, UV_C ' \, , \, \CalH_C )$ is a minimal
Stinespring representation for $\pi_A \circ E^A$ by Lemma \ref {lem:Rel5}, $\pi_A \circ E^A$
and $\pi_B \circ E^B$ are strongly Morita equivalent as completely positive linear maps.
\end{proof}

Combining the above discussions, we obtain the following theorem.

\begin{thm}\label{thm:Rel7} Let $A\subset C$ and $B\subset D$ be inclusions of $C^*$-algebras
with $\overline{AC}=C$ and $\overline{BD}=D$. Let $E^A$ and $E^B$ conditional expectations
from $C$ and $D$ onto $A$ and $B$, respectively. We assume that $E^A$ and $E^B$ are
strongly Morita equivalent with respect to a $C-D$-equivalence bimodule $Y$ and its closed subspace $X$.
Then for any non-degenerate representation $(\pi_B , \CalK_B )$ of $B$, there exists a non-degenerate
representation $(\pi_A , \CalH_A )$ of $A$ such that $\pi_A \circ E^A$ and $\pi_B \circ E^B$ are
strongly Morita equivalent as completely positive linear maps.
\end{thm}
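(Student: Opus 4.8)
The plan is to produce the desired representation of $A$ by the bimodule tensoring construction and then to recognize that the required strong Morita equivalence of completely positive maps is exactly what the preceding lemmas assemble. First I would start from the given non-degenerate representation $(\pi_B, \CalK_B)$ of $B$ and set $\CalH_A = X \otimes_B \CalK_B$ with $\pi_A(a)(x \otimes \xi) = (a \cdot x) \otimes \xi$, regarding $\CalK_B$ as a Hilbert $B$-$\BC$-bimodule via $\pi_B$. By \cite[Lemma 2.2]{ER:multiplier} this $(\pi_A, \CalH_A)$ is a non-degenerate representation of $A$ that is strongly Morita equivalent to $(\pi_B, \CalK_B)$ via $X$ and the map $\pi_X(x)\xi = x \otimes \xi$. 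This furnishes the candidate representation; it then remains only to verify that $\pi_A \circ E^A$ and $\pi_B \circ E^B$ are strongly Morita equivalent as completely positive maps, which is precisely the content of Lemma \ref{lem:Rel6}.

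By Definition \ref{Def:DP3}, that last point reduces to exhibiting minimal Stinespring representations for $\pi_A \circ E^A$ and $\pi_B \circ E^B$ that are strongly Morita equivalent as non-degenerate representations. On the $D$-side I would dilate $\pi_B \circ E^B$ to its minimal Stinespring representation $(\pi_D, V_D, \CalK_D)$ and then form $(\pi_C, \CalH_C)$ with $\CalH_C = Y \otimes_D \CalK_D$; exactly as in the $A$-$B$ case, $(\pi_C, \CalH_C)$ is strongly Morita equivalent to $(\pi_D, \CalK_D)$ via $Y$. On the $C$-side I would dilate $\pi_A \circ E^A$ to its own minimal Stinespring representation $(\pi_C', V_C', \CalH_C')$. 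If I can identify these two dilations by a unitary $U$ with $\pi_C(c) = U\pi_C'(c)U^*$, then $(\pi_C, UV_C', \CalH_C)$ becomes a minimal Stinespring representation for $\pi_A \circ E^A$ (Lemma \ref{lem:Rel5}), and since this $\CalH_C$ is already strongly Morita equivalent to the Stinespring space $\CalK_D$ of $\pi_B \circ E^B$, Definition \ref{Def:DP3} yields the strong Morita equivalence of the two completely positive maps.

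The main obstacle is exactly this identification of $\CalH_C$ and $\CalH_C'$, since the two are built by genuinely different recipes — one as the balanced tensor product $Y \otimes_D \CalK_D$ over the dilated $D$-representation, the other as the Stinespring completion of $C \odot \CalH_A$. The strategy I would follow is to route both through the intermediate Hilbert space $\CalE = Y_B \otimes_B \CalK_B$, proving $\CalH_C \cong \CalE$ and $\CalH_C' \cong \CalE$ separately (Lemmas \ref{lem:Rel3} and \ref{lem:Rel1}). Surjectivity of the comparison maps $\Phi$ and $\Psi$ forces the approximate-unit estimates of Lemma \ref{lem:Rel2} together with the density $Y = \overline{C \cdot X}$, while the inner-product computations rest on the compatibility identity $\la z, E^A(a) \cdot z_1 \ra_B = E^B(\la z, a \cdot z_1 \ra_D)$ coming from the assumed strong Morita equivalence of $E^A$ and $E^B$ as bimodule maps. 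With both isometries in hand, I would set $U = \Phi^* \Psi$ and check that it intertwines $\pi_C'$ and $\pi_C$ (Lemma \ref{lem:Rel4}) by a direct computation on elementary tensors $c \otimes (x \otimes \xi)$. Combining the construction of $(\pi_A, \CalH_A)$ with Lemma \ref{lem:Rel6} then closes the proof.
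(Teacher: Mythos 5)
Your proposal reproduces the paper's own argument: the representation $(\pi_A,\CalH_A)$ is built as $X\otimes_B\CalK_B$, both dilations $\CalH_C=Y\otimes_D\CalK_D$ and the minimal Stinespring space $\CalH_C'$ for $\pi_A\circ E^A$ are identified through the intermediate space $\CalE=Y_B\otimes_B\CalK_B$ via the isometries $\Phi$ and $\Psi$, and $U=\Phi^*\Psi$ intertwines $\pi_C'$ with $\pi_C$, exactly as in Lemmas \ref{lem:Rel2}--\ref{lem:Rel6}. This is correct and essentially identical to the paper's proof.
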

\begin{proof} This is immediate by Lemma \ref {lem:Rel6}.
\end{proof}

Next, we will consider the inverse direction. Let $A\subset C$ and $B\subset D$ be as above.
We suppose that $A\subset C$ and $B\subset D$ are strongly Morita equivalent
with respect to a $C-D$-equivalence bimodule $Y$ and its closed subspace $X$.
Let $E^A$ and $E^B$ be conditional expectations from $C$ and $D$ onto $A$ and $B$, respectively.
Let $(\pi_B , \CalK_B )$ be a non-degenerate
representation $B$ and $(\pi_A , \CalH_A )$ be the non-degenerate representation of $A$
induced by $X$ and $(\pi_B , \CalK_B )$, which is defined in \cite [Lemma 2,2]{ER:multiplier}.
Let $(\pi_D, V_D , \CalK_D )$ be a minimal Stinespring representation for $\pi_B \circ E^B$ and
$(\pi_C , \CalH_C )$ the non-degenerate representation of $C$ induced by $Y$ and
$(\pi_D , \CalK_D )$. First, we show the following lemma.

\begin{lemma}\label{lem:Rel8} Let $\{d_{\lambda}\}_{\lambda\in\Lambda}$ be an approximate
unit of $D$ with $d_{\lambda}\ge 0$ and $||d_{\lambda}||\le 1$ for any $\lambda\in\Lambda$.
Then $\{y\otimes (d_{\lambda}\otimes\xi)\}_{\lambda\in\Lambda}$ is a Cauchy net in $\CalH_C$
with respect to the weak topology of $\CalH_C$ for any $y\in Y$, $\xi\in \CalK_B$.
\end{lemma}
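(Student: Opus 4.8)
The plan is to prove weak-Cauchyness in two stages: first a uniform norm bound on the net, then convergence of its inner products against a total family of vectors. The combination of these two facts is exactly what is needed to conclude that $\la y\otimes(d_\lambda \otimes\xi) \, , \, z\ra_{\CalH_C}$ converges for every $z\in\CalH_C$, which is the assertion that the net is weakly Cauchy.

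First I would make explicit the structure of $\CalH_C =Y\otimes_D \CalK_D$ and of $\CalK_D$: recall that $\la y\otimes\zeta \, , \, y_1 \otimes\zeta_1 \ra_{\CalH_C}=\la \zeta \, , \, \pi_D (\la y, y_1 \ra_D )\zeta_1 \ra_{\CalK_D}$ and that $\pi_D (d)(d_1 \otimes\xi_1 +\CallN_D )=dd_1 \otimes\xi_1 +\CallN_D$. Using that $E^B$ and $\pi_B$ are contractive and $\|d_\lambda \|\le 1$, one gets $\|d_\lambda \otimes\xi+\CallN_D \|_{\CalK_D}^2 =\la \xi \, , \, (\pi_B \circ E^B )(d_\lambda^2 )\xi \ra_{\CalK_B}\le \|\xi\|^2$, and since $\|y\otimes\zeta\|_{\CalH_C}^2 \le \|\la y, y\ra_D \|\, \|\zeta\|_{\CalK_D}^2$ this yields the uniform bound $\|y\otimes(d_\lambda \otimes\xi)\|_{\CalH_C}\le \|y\|\, \|\xi\|$ for every $\lambda\in\Lambda$.

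Next I would take as total family the elementary tensors $y_1 \otimes(d_1 \otimes\xi_1 )$ with $y_1 \in Y$, $d_1 \in D$, $\xi_1 \in\CalK_B$, whose linear span is dense in $\CalH_C$. A direct computation, using $d_\lambda^* =d_\lambda$, gives $\la y\otimes(d_\lambda \otimes\xi) \, , \, y_1 \otimes(d_1 \otimes\xi_1 )\ra_{\CalH_C}=\la \xi \, , \, (\pi_B \circ E^B )(d_\lambda \la y, y_1 \ra_D d_1 )\xi_1 \ra_{\CalK_B}$. Since $\la y, y_1 \ra_D d_1$ lies in $D$ and $\{d_\lambda \}$ is an approximate unit, $d_\lambda \la y, y_1 \ra_D d_1 \to \la y, y_1 \ra_D d_1$ in norm, so these inner products converge to $\la \xi \, , \, (\pi_B \circ E^B )(\la y, y_1 \ra_D d_1 )\xi_1 \ra_{\CalK_B}$.

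Finally I would combine the two stages: a bounded net in a Hilbert space whose inner products converge against a dense-spanning family has convergent inner products against every vector, by a routine $3\epsilon$-estimate in which the uniform bound controls the two tail terms and the total-family convergence handles the middle term. This gives that the net is weakly Cauchy. I expect the one genuinely delicate point to be precisely this last reduction, where the uniform boundedness from the first stage is indispensable for passing from the dense span to all of $\CalH_C$; the norm-convergence $d_\lambda \la y, y_1 \ra_D d_1 \to \la y, y_1 \ra_D d_1$ coming from the approximate unit is the computational core that drives the whole argument.
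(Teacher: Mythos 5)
Your proof is correct and follows essentially the same route as the paper's: both reduce weak Cauchyness to convergence of the inner products against the dense span of elementary tensors $y_1\otimes(d_1\otimes\xi_1)$, and the computational core in both is the identity $\la y\otimes(d_\lambda\otimes\xi),\, y_1\otimes(d_1\otimes\xi_1)\ra_{\CalH_C}=\la\xi,\,(\pi_B\circ E^B)(d_\lambda\la y,y_1\ra_D\, d_1)\xi_1\ra_{\CalK_B}$ together with the approximate-unit property. The only difference is that you explicitly supply the uniform bound $\|y\otimes(d_\lambda\otimes\xi)\|_{\CalH_C}\le\|y\|\,\|\xi\|$ needed to pass from the dense span to all of $\CalH_C$, a step the paper leaves implicit when it asserts that testing against the dense span ``suffices''.
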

\begin{proof} Since the linear span of the set
$$
\{y\otimes (d\otimes\xi)\in \CalH_C \, | \, y\in Y, \, d\in D, \, \xi\in \CalK_B \}
$$
is dense in $\CalH_C$,  it suffices to show that for any $y_1 \in Y$, $\xi_1 \in \CalK_B$, $d_1 \in D$,
the net $\{ \la y\otimes(d_{\lambda}\otimes \xi) \, , \, y_1 \otimes(d_1 \otimes \xi_1 )\ra_{\CalH_C} \}_{\lambda}$
is a Cauchy net. For any $\lambda, \mu\in\Lambda$,
\begin{align*}
& \la y\otimes((d_{\lambda}-d_{\mu})\otimes\xi) \, , \, y_1 \otimes(d_1 \otimes\xi_1 )\ra_{\CalH_C} \\
& =\la (d_{\lambda}-d_{\mu})\otimes\xi \, , \, \pi_D (\la y\, ,\, y_1 \ra_D )(d_1 \otimes \xi_1 ) \ra_{\CalK_D} \\
& =\la (d_{\lambda}-d_{\mu})\otimes\xi \, , \, \la y \, , \, y_1 \ra_D \, d_1 \otimes\xi_1 \ra_{\CalK_D} \\
& =\la \xi \, , \, (\pi_B \circ E^B )((d_{\lambda}-d_{\mu})\la y\, , \, y_1 \ra_D \, d_1 )\xi_1 \ra_{\CalK_B}\to 0 \quad
(\lambda\, , \, \mu\to\infty) .
\end{align*}
Thus, we obtain the conclusion.
\end{proof}

Let $V_C$ be the linear map from $\CalH_A$ to $\CalH_C$ defined by
$$
V_C (x\otimes \xi )=\lim_{\lambda}x\otimes (d_{\lambda}\otimes\xi)
$$
for any $x\in X$, $\xi\in\CalK_B$, where $\{d_{\lambda}\}_{\lambda\in\Lambda}$ is an approximate
unit of $D$ with $d_{\lambda}\ge 0$ and $||d_{\lambda}||\leq 1$ for any $\lambda\in\Lambda$ and
the limit is taken under the weak topology of $\CalH_C$. By Lemma \ref {lem:Rel8}, the above limit
is convergent with respect to the weak topology of $\CalH_C$ and by routine computations, $V_C$ is
well-defined and independent of the choice of an approximate unit of $D$.

\begin{lemma}\label{lem:Rel9} With the above notation, $V_C$ is an isometry from $\CalH_A$ to $\CalH_C$.
\end{lemma}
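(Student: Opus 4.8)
The plan is to verify directly that $V_C$ preserves the inner product on the dense domain $X \otimes_B \CalK_B$ of $\CalH_A$. Since $V_C$ is defined as a weak limit of the net $\{x \otimes (d_\lambda \otimes \xi)\}_\lambda$ in $\CalH_C$, and Lemma \ref{lem:Rel8} guarantees this net is weakly Cauchy, the map is well-defined as a linear map into $\CalH_C$. To show it is an isometry, I would take two elementary tensors $x \otimes \xi$ and $x_1 \otimes \xi_1$ with $x, x_1 \in X$ and $\xi, \xi_1 \in \CalK_B$, and compute $\la V_C(x \otimes \xi), V_C(x_1 \otimes \xi_1) \ra_{\CalH_C}$ by writing each $V_C$ as its defining weak limit and pulling the limit out of the inner product.

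First I would expand, for fixed approximate units, the quantity
$$
\la x \otimes (d_\lambda \otimes \xi) \, , \, x_1 \otimes (d_\mu \otimes \xi_1) \ra_{\CalH_C}
= \la d_\lambda \otimes \xi \, , \, \pi_D(\la x, x_1 \ra_D)(d_\mu \otimes \xi_1) \ra_{\CalK_D},
$$
then use $\pi_D(\la x, x_1 \ra_D)(d_\mu \otimes \xi_1) = \la x, x_1 \ra_D \, d_\mu \otimes \xi_1$ and the defining formula for the inner product on $\CalK_D$ to reduce this to
$$
\la \xi \, , \, (\pi_B \circ E^B)(d_\lambda \la x, x_1 \ra_D \, d_\mu) \xi_1 \ra_{\CalK_B}.
$$
Passing to the limit in $\lambda$ and $\mu$ (legitimate since the approximate unit acts as an identity in the limit under $\pi_B \circ E^B$, exactly as in the computation of $V_D^*$ before Lemma \ref{lem:Rel2}) collapses $d_\lambda \la x, x_1 \ra_D \, d_\mu$ to $\la x, x_1 \ra_D$, yielding $\la \xi, (\pi_B \circ E^B)(\la x, x_1 \ra_D) \xi_1 \ra_{\CalK_B}$.

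Finally I would compare this with the target inner product in $\CalH_A$, namely
$$
\la x \otimes \xi \, , \, x_1 \otimes \xi_1 \ra_{\CalH_A}
= \la \xi \, , \, \pi_B(\la x, x_1 \ra_B) \xi_1 \ra_{\CalK_B}.
$$
Since $X$ carries the right $B$-valued inner product $\la x, x_1 \ra_B = E^B(\la x, x_1 \ra_D)$ (the inner product making $X$ a Hilbert $B$-module is the one restricted from $Y_B$), the two expressions coincide, so $V_C$ is isometric on elementary tensors and extends by linearity and continuity to an isometry on all of $\CalH_A$. The main obstacle is the interchange of the weak limit with the inner product: because $V_C$ is only a weak limit, I must take the two limits separately and justify that each partial net converges, which is where Lemma \ref{lem:Rel8} and the approximate-identity behavior of $\pi_B \circ E^B$ are essential rather than a formal continuity argument.
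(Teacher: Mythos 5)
Your proposal is correct and follows essentially the same route as the paper: expand the inner product of the defining weak limits on elementary tensors, reduce via the $\CalK_D$ inner product to $\la \xi \, , \, (\pi_B \circ E^B)(d_{\lambda}\la x, x_1 \ra_D \, d_{\mu})\xi_1 \ra_{\CalK_B}$, and pass to the limit in $\lambda$ and $\mu$. Your closing identification is also fine, since for $x, x_1 \in X$ one has $\la x, x_1 \ra_D = \la x, x_1 \ra_B \in B$ and $E^B$ fixes $B$, so $(\pi_B \circ E^B)(\la x, x_1 \ra_D) = \pi_B(\la x, x_1 \ra_B)$ exactly as needed.
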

\begin{proof} For any $x, x_1 \in X$, $\xi, \xi_1 \in \CalK_B$,
\begin{align*}
\la V_C(x\otimes\xi) \, , \, V_C (x_1 \otimes\xi_1 )\ra_{\CalH_C} & =
\lim_{\lambda, \, \mu}\la x\otimes (d_{\lambda}\otimes\xi) \, , \, x_1 \otimes (d_{\mu}\otimes\xi_1 ) \ra_{\CalH_C} \\
& =\lim_{\lambda , \, \mu}\la d_{\lambda}\otimes\xi \, , \, \pi_D (\la x, x_1 \ra_B )(d_{\mu}\otimes\xi_1 )\ra_{\CalK_D} \\
& =\lim_{\lambda , \, \mu}\la d_{\lambda}\otimes\xi \, , \, \la x, x_1 \ra_B \, d_{\mu}\otimes\xi_1 \ra_{\CalK_D} \\
& =\lim_{\lambda , \, \mu}\la \xi \, , \, (\pi_B \circ E^B )(d_{\lambda} \la x, \, x_1 \ra_B \, d_{\mu})\xi_1
\ra_{\CalK_B} \\
& =\la \xi \, , \, \pi_B (\la x, \, x_1 \ra_B )\xi_1 \ra_{\CalK_B} \\
& =\la x\otimes\xi \, , \, x_1 \otimes\xi_1 \ra_{\CalH_A} .
\end{align*}
Thus, we obtain the conclusion.
\end{proof}

\begin{prop}\label{prop:Rel10} Let $V_C$ be the isometry from $\CalH_A$ to $\CalH_C$ defined by
$$
V_C (x\otimes \xi)=\lim_{\lambda}x \otimes (d_{\lambda}\otimes\xi)
$$
for any $x\in X$, $\xi\in\CalK_B$, where $\{d_{\lambda}\}_{\lambda\in\Lambda}$ is an approximate unit
of $D$ with $d_{\lambda}\ge 0$ and $||d_{\lambda}||\le 1$ for any $\lambda\in\Lambda$ and the limit
is taken under the weak topology of $\CalH_C$. We suppose that $(\pi_B , \CalK_B )$ is faithful and that
$$
(\pi_A \circ E^A )(c)=V_C^* \pi_C (c)V_C
$$
for any $c\in C$. Then $E^A$ and $E^B$ are strongly Morita equivalent as bimodule linear maps.
\end{prop}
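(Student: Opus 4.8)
The plan is to unwind both sides of the hypothesized identity $(\pi_A \circ E^A)(c) = V_C^* \pi_C(c) V_C$ into explicit matrix coefficients on $\CalH_A = X \otimes_B \CalK_B$, and then to strip off $\pi_B$ and the arbitrary vectors of $\CalK_B$ using faithfulness of $\pi_B$, so as to recover exactly the relation characterizing strong Morita equivalence of the bimodule linear maps (the identity invoked in the proof of Lemma \ref{lem:Rel1}). Accordingly, by \cite[Definition 3.1]{Kodaka:Picard3} it suffices to establish that $\la z, E^A(a) \cdot z_1 \ra_B = E^B(\la z, a \cdot z_1 \ra_D)$ for all $z, z_1 \in X$ and $a \in C$.

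First I would fix $x, x_1 \in X$ and $\xi, \xi_1 \in \CalK_B$ and compute the right-hand coefficient. Moving $V_C^*$ across as an adjoint, using $\pi_C(c)(y \otimes \zeta) = (c \cdot y) \otimes \zeta$, the weak-limit description of $V_C$ from Lemma \ref{lem:Rel8} and Lemma \ref{lem:Rel9}, and the inner products on $\CalH_C = Y \otimes_D \CalK_D$ and on $\CalK_D$ (exactly as computed in Lemma \ref{lem:Rel3}), I would obtain
\begin{align*}
\la \pi_C(c) V_C(x \otimes \xi), V_C(x_1 \otimes \xi_1) \ra_{\CalH_C}
&= \lim_{\lambda, \mu} \la (c \cdot x) \otimes (d_\lambda \otimes \xi), x_1 \otimes (d_\mu \otimes \xi_1) \ra_{\CalH_C} \\
&= \lim_{\lambda, \mu} \la \xi, (\pi_B \circ E^B)(d_\lambda^* \la c \cdot x, x_1 \ra_D d_\mu) \xi_1 \ra_{\CalK_B} \\
&= \la \xi, (\pi_B \circ E^B)(\la c \cdot x, x_1 \ra_D) \xi_1 \ra_{\CalK_B},
\end{align*}
the last step using that $d_\lambda^* \la c \cdot x, x_1 \ra_D d_\mu \to \la c \cdot x, x_1 \ra_D$ in norm together with continuity of $\pi_B \circ E^B$ and of the inner product. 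On the other hand, since $\pi_A(E^A(c))(x \otimes \xi) = (E^A(c) \cdot x) \otimes \xi$, the left-hand coefficient is
$$
\la (\pi_A \circ E^A)(c)(x \otimes \xi), x_1 \otimes \xi_1 \ra_{\CalH_A}
= \la \xi, \pi_B(\la E^A(c) \cdot x, x_1 \ra_B) \xi_1 \ra_{\CalK_B}.
$$

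Equating the two and using that $\xi, \xi_1 \in \CalK_B$ are arbitrary yields $\pi_B(\la E^A(c) \cdot x, x_1 \ra_B) = \pi_B(E^B(\la c \cdot x, x_1 \ra_D))$ as operators on $\CalK_B$. Here the faithfulness of $\pi_B$ is the decisive hypothesis: it permits cancelling $\pi_B$ to get $\la E^A(c) \cdot x, x_1 \ra_B = E^B(\la c \cdot x, x_1 \ra_D)$ for all $x, x_1 \in X$ and $c \in C$. Taking adjoints (using that $E^B$ preserves the involution and that $\la \cdot, \cdot \ra_D$ swaps its arguments under $*$) and relabelling $z = x_1$, $z_1 = x$, $a = c$ gives
$$
\la z, E^A(a) \cdot z_1 \ra_B = E^B(\la z, a \cdot z_1 \ra_D),
$$
which is the defining relation, so the conclusion follows.

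The routine-but-delicate point is the first computation: one must justify interchanging the weak limits defining $V_C$ with the bounded operator $\pi_C(c)$ and with the continuous sesquilinear form, and then check that the two iterated approximate-unit limits coincide with the joint limit, which they do because $d_\lambda^* \la c \cdot x, x_1 \ra_D d_\mu$ converges to $\la c \cdot x, x_1 \ra_D$ in norm. This is the main thing to handle with care; the genuinely essential ingredient, rather than an obstacle, is the faithfulness of $\pi_B$, without which one would recover the desired identity only after applying $\pi_B$.
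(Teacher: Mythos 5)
Your proposal is correct and follows essentially the same route as the paper: compute the matrix coefficients of both sides on elementary tensors of $\CalH_A$, pass the approximate-unit limits through to get $\la\xi,(\pi_B\circ E^B)(\la c\cdot x,x_1\ra_D)\xi_1\ra_{\CalK_B}$ on one side and $\la\xi,\pi_B(\la E^A(c)\cdot x,x_1\ra_B)\xi_1\ra_{\CalK_B}$ on the other, and use faithfulness of $\pi_B$ to cancel it. The only cosmetic difference is that the paper closes by citing Lemma 2.5 of the Picard-group paper rather than taking adjoints to match the form of the defining relation, which amounts to the same thing.
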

\begin{proof} For any $c\in C$, $x, x_1 \in X$ and $\xi, \xi_1 \in \CalK_B$,
\begin{align*}
\la (\pi_A \circ E^A )(c)(x\otimes\xi) \, , \, x_1 \otimes\xi_1 \ra_{\CalH_A} & =
\la (E^A (c)\cdot x)\otimes \xi \, , \, x_1 \otimes \xi_1 \ra_{\CalH_A} \\
& =\la \xi \, , \, \pi_B (\la (E^A (c)\cdot x)\, , \, x_1 \ra_B )\xi_1 \ra_{\CalK_B} .
\end{align*}
Also,
\begin{align*}
\la V_C^* \pi_C(c)V_C (x\otimes \xi)\, , \, x_1 \otimes\xi_1 \ra_{\CalH_A} & =
\la \pi_C (c)V_C (x\otimes\xi) \, , \, V_C (x_1 \otimes\xi_1 )\ra_{\CalH_C} \\
& =\lim_{\lambda, \, \mu}\la \pi_C (c)(x\otimes(d_{\lambda}\otimes\xi))\, , \, x_1 \otimes
(d_{\mu}\otimes\xi_1 )\ra_{\CalH_C} \\
& =\lim_{\lambda, \, \mu}\la (c\cdot x)\otimes(d_{\lambda}\otimes\xi)\, , \, x_1 \otimes
(d_{\mu}\otimes\xi_1 )\ra_{\CalH_C} \\
& =\lim_{\lambda, \, \mu}\la d_{\lambda}\otimes\xi\, , \, \pi_D (\la c\cdot x\, , \, x_1 \ra_D )
d_{\mu}\otimes\xi_1\ra_{\CalK_D} \\
& =\lim_{\lambda}\la d_{\lambda}\otimes\xi \, , \, \la c\cdot x \, , \, x_1 \ra_D \otimes\xi_1 \ra_{\CalK_B} \\
& =\lim_{\lambda}\la \xi \, , \, (\pi_B \circ E^B )(d_{\lambda} \, \la c\cdot x \, , \, x_1 \ra_D )\xi_1 \ra_{\CalK_B} \\
& =\la \xi \, , \, (\pi_B \circ E^B )(\la c\cdot x \, , \, x_1 \ra_D )\xi_1 \ra_{\CalK_B} .
\end{align*}
Since $(\pi_A \circ E^A)(c)=V_C^* \pi_C (c)V_C$ for any $c\in C$ and $\pi_B$ is faithful,
$$
E^B (\la c\cdot x \, , \, x_1 \ra_D )=\la (E^A (c)\cdot x) \, , \, x_1 \ra _B .
$$
for any $c\in C$, $x, x_1 \in X$. By \cite [Lemma 2.5]{Kodaka:Picard3}, $E^A$ and $E^B$ are strongly Morita
equivalent as bimodule linear maps.
\end{proof}

\section{A correspondence of strong Morita equivalence classes of completely positive linear maps}
\label{sec:Co} Let $A$ and $B$ be $C^*$-algebras, which are strongly Morita equivalent with
respect to an $A-B$-equivalence bimodule $X$. In this section, we will construct a $1-1$ correspondence between
the set of all strong Morita equivalence classes of completely positive linear maps on $A$ and the set of
all strong Morita equivalence classes of completely positive linear maps on $B$ and we will show that
the corresponding positive linear maps are strongly Morita equivalent.
\par
Let $\psi$ be a completely positive linear map from $B$ to $\BB(\CalK)$, where $\CalK$ is a Hilbert
space. Let $(\pi_{\psi}, V_{\psi}, \CalK_{\psi})$ be a minimal Stinespring representation for $\psi$.
Let $(\pi_A , \CalH_A )$ be the non-degenerate representation of $A$ induced by $X$ and
$(\pi_{\psi}, V_{\psi}, \CalK_{\psi})$. Let $\{u_i \}_{i=1}^n$ be a finite subset of $X$.
Let $\phi$ be the linear map from $A$ to $\BB(\CalK)\otimes M_n (\BC)$ defined by
$$
\left[\phi(a)_{ij}\right]_{i,j=1}^n =\left[\psi(\la u_i \, , \, a\cdot u_j \ra_B )\right]_{i, j=1}^n
$$
for any $a\in A$. Since $\psi(b)=V_{\psi}^* \pi_{\psi}(b)V_{\psi}$ for any $b\in B$,
\begin{align*}
\left[ \phi(a)_{i j}\right]_{i, j=1}^n & =\left[V_{\psi}^* \pi_{\psi}
(\la u_i \, , \, a\cdot u_j \ra_B )V_{\psi} \right]_{i, j=1}^n \\
& =(V_{\psi}^* \otimes I_n )\left[\pi_{\psi}(\la u_i \, , \, a\cdot u_j \ra_B )\right]_{i, j=1}^n (V_{\psi}\otimes I_n ) .
\end{align*}
\par
First, we will show that $\phi$ is a completely positive linear map from $A$ to $\BB(\CalK)\otimes M_n (\BC)$.
\par
Let $X^n$ be the $n$-times direct sum of $X$ and we regard $X^n$ as an $A-M_n (B)$-equivalence bimodule
as follows: For any $a\in A$, $\left[ b_{ij}\right]_{i, j=1}^n \in M_n (B)$, $\left[ x_1 , \dots, x_n \right]$,
\newline
$\left[ z_1 , \dots , z_n \right]\in X^n$, we define the left $A$-action, right $M_n (B)$-action and the left
$A$-valued inner product, the right $M_n (B)$-valued inner product on $X^n$ by setting
\begin{align*}
a\cdot \left[ x_1 , \dots, x_n \right] & =\left[ a\cdot x_1 , \dots, a\cdot x_n \right] , \\
\left[ x_1 , \dots, x_n \right]\cdot \left[ b_{ij}\right]_{i, j=1}^n & =\left[\sum_{i=1}^n x_i \cdot b_{i1}, \dots ,
\sum_{i=1}^n x_i \cdot b_{i\, n}\right], \\
{}_A \la \left[ x_1 , \dots, x_n \right]\, , \, \left[ z_1 , \dots, z_n \right] \ra & =
\sum_{i=1}^n {}_A \la x_i \, , \, z_i \ra , \\
\la \left[ x_1 , \dots, x_n \right]\, , \, \left[ z_1 , \dots, z_n \right] \ra_{M_n (B)} & =
\left[ \la x_i \, , \, z_j \ra_B \right]_{i, j=1}^n .
\end{align*}
Let $Y=X^n$ and $D=M_n (B)$. We regard $Y$ as an $A-D$-equivalence bimodule in the abovev way.
For each $m\in \BN$, let $M_m (Y)$ be the $\BC$-linear space of all matrices over $Y$. We regard $M_m (Y)$
as an $M_m (A)-M_m (D)$-equivalence bimodule as follows: For any $\left[a_{kl}\right]_{k, l=1}^m \in M_m (A)$,
$\left[d_{kl}\right]_{k, l=1}^m \in M_m (D)$, $\left[y_{kl} \right]_{k, l=1}^m$, $\left[z_{kl}\right]_{k, l=1}^m M_ m(Y)$,
we define the left $M_m (A)$-action, the right $M_m (D)$-action and the left $M_m (A)$-valued inner product,
the right $M_m (D)$-valued inner product on $M_m (Y)$ by setting
\begin{align*}
\left[a_{kl}\right]_{k, l=1}^m \cdot\left[ y_{kl}\right]_{k, l=1}^m & =
\left[\sum_{t=1}^m a_{kt}\cdot b_{tl}\right]_{k, l=1}^m , \\
\left[ y_{kl}\right]_{k, l=1}^m \cdot \left[ d_{kl}\right]_{k, l=1}^m & =
\left[ \sum_{t=1}^m y_{kt}\cdot d_{tl} \right]_{k, l=1}^m ,  \\
{}_{M_m (A)} \la \left[ y_{kl}\right]_{k, l=1}^m \, ,\, \left[ z_{kl}\right]_{k, l=1}^m \ra & =
\left[ \sum_{t=1}^m {}_A \la y_{kt} \, , \, z_{lt} \ra \right]_{k, l=1}^m , \\
\la \left[ y_{kl}\right]_{k, l=1}^m \, ,\, \left[ z_{kl}\right]_{k, l=1}^m \ra_{M_m (D)} & =
\left[ \sum_{t=1}^m \la y_{tk} \, , \, z_{tl} \ra_D \right]_{k, l=1}^m .
\end{align*}

\begin{lemma}\label{lem:Co1} With the above notation, $\phi$ is a completely positive linear map
from $A$ to $\BB(\CalK)\otimes M_n (\BC)$.
\end{lemma}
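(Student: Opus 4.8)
The plan is to realize $\phi$ as the compression of the $*$-representation $\pi_A$ by a single bounded operator, from which complete positivity is immediate. The text has already reduced the problem to showing that the map
$$
a \longmapsto \left[\pi_{\psi}(\la u_i \, , \, a\cdot u_j \ra_B )\right]_{i, j=1}^n
$$
is completely positive from $A$ into $\BB(\CalK_{\psi})\otimes M_n (\BC)$, since conjugating by the fixed operator $V_{\psi}\otimes I_n$ preserves complete positivity. So the core of the argument is to factor this inner map through the induced representation $(\pi_A , \CalH_A )$, where $\CalH_A = X\otimes_B \CalK_{\psi}$.

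First I would introduce the bounded linear operator $T : \CalK_{\psi}^{\oplus n}\to \CalH_A$ defined by
$$
T(\xi_1 , \dots, \xi_n )=\sum_{j=1}^n u_j \otimes\xi_j
$$
for any $\xi_1 , \dots, \xi_n \in \CalK_{\psi}$; this is well defined, linear and bounded since it is a finite sum built from the fixed finite subset $\{u_i \}_{i=1}^n$. The key computation is then to evaluate $T^* \pi_A (a)T$ as an operator on $\CalK_{\psi}^{\oplus n}=\CalK_{\psi}\otimes\BC^n$. Using the definition $\pi_A (a)(x\otimes\xi)=(a\cdot x)\otimes\xi$ and the inner product on $\CalH_A$, for any $\xi_1 , \dots, \xi_n , \eta_1 , \dots, \eta_n \in \CalK_{\psi}$ I would check that
$$
\la (\xi_i )_i \, , \, T^* \pi_A (a)T(\eta_j )_j \ra
=\sum_{i, j=1}^n \la \xi_i \, , \, \pi_{\psi}(\la u_i \, , \, a\cdot u_j \ra_B )\eta_j \ra_{\CalK_{\psi}} ,
$$
which shows that $T^* \pi_A (a)T=\left[\pi_{\psi}(\la u_i \, , \, a\cdot u_j \ra_B )\right]_{i, j=1}^n$ under the identification $\BB(\CalK_{\psi}^{\oplus n})=\BB(\CalK_{\psi})\otimes M_n (\BC)$. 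Verifying this matrix identity, i.e.\ matching the block-by-block inner products, is the main (though routine) step.

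Combining this with the reduction already recorded in the text, I would set $S=T(V_{\psi}\otimes I_n ) : \CalK\otimes\BC^n \to \CalH_A$ and conclude that
$$
\phi(a)=(V_{\psi}^* \otimes I_n )\, T^* \pi_A (a)T\, (V_{\psi}\otimes I_n )=S^* \pi_A (a)S
$$
for any $a\in A$. Since $\pi_A$ is a $*$-representation of $A$, hence completely positive, and compression by the fixed operator $S$ preserves complete positivity, the composite $a\mapsto S^* \pi_A (a)S$ is completely positive; concretely, for any positive $[a_{kl}]\in M_m (A)$ the matrix $[\pi_A (a_{kl})]_{k, l=1}^m$ is positive because $\pi_A$ is a homomorphism, and conjugating it by the $m$-fold ampliation of $S$ keeps it positive. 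Therefore $\phi$ is a completely positive linear map from $A$ to $\BB(\CalK)\otimes M_n (\BC)$. I expect no serious obstacle beyond keeping the two tensor factors, the $\CalK_{\psi}$ part and the $M_n (\BC)$ part, correctly aligned in the factorization $\phi=S^* \pi_A (\cdot )S$.
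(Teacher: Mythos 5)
Your proof is correct, but it takes a genuinely different route from the paper's. You factor $\phi$ through the induced representation: you build the bounded operator $T:\CalK_{\psi}\otimes\BC^n\to\CalH_A=X\otimes_B\CalK_{\psi}$, $T(\xi_j)_j=\sum_j u_j\otimes\xi_j$, verify the block identity $T^*\pi_A(a)T=\left[\pi_{\psi}(\la u_i , a\cdot u_j\ra_B)\right]_{i,j}$, and conclude that $\phi(a)=S^*\pi_A(a)S$ with $S=T(V_{\psi}\otimes I_n)$, so complete positivity follows because compressions of $*$-representations are completely positive. The paper instead argues entirely at the bimodule level: it forms $Y=X^n$ as an $A$--$M_n(B)$-equivalence bimodule, amplifies to $M_m(Y)$, and shows that for positive $[a_{kl}]\in M_m(A)$ the matrix $\left[\left[\la u_i , a_{kl}\cdot u_j\ra_B\right]_{i,j}\right]_{k,l}$ equals an inner product of the form $\la z , [a_{kl}]\cdot z\ra_{M_m(M_n(B))}$, hence is positive, and then applies the complete positivity of $\psi$ entrywise. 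Both arguments are sound and work at the level of generality of the lemma (no unitality is needed yet). Your approach is shorter and buys more: the explicit Stinespring-type dilation $\phi=S^*\pi_A(\cdot)S$ essentially anticipates the operator $U$ constructed in Lemmas \ref{lem:Co2}--\ref{lem:Co4}, so it would streamline the subsequent proof that $\phi$ and $\psi$ are strongly Morita equivalent. The paper's approach, by contrast, stays purely algebraic and does not invoke the induced representation at this stage. The only points to make explicit in a final write-up are the boundedness of $T$ (via $\|u_j\otimes\xi\|^2=\la\xi,\pi_{\psi}(\la u_j,u_j\ra_B)\xi\ra\le\|\la u_j,u_j\ra_B\|\,\|\xi\|^2$) and the block-matrix bookkeeping you already flag.
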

\begin{proof} Since $\phi$ is clearly linear, we have only to show that for any $\left[a_{kl}\right]_{k,l=1}^m \in M_m (A)$
with $\left[a_{kl}\right]_{k,l=1}^m \geq 0$,
$$
\left[\phi(a_{kl})\right]_{k, l=1}^m \geq 0 .
$$
Let $\left[a_{kl}\right]_{k,l=1}^m$ be any positive element in $M_m (A)$. Then by the definition of $\phi$,
$$
\phi(a_{kl})_{i j}=\psi(\la u_i \, , \, a_{kl}\cdot u_j \ra_B ) .
$$
Thus
\begin{align*}
& (\phi\otimes\id_{M_m(\BC)})(\left[a_{kl}\right]_{k,l=1}^m ) \\
& =\begin{bmatrix}\left[ \psi(\la u_i \, , \, a_{11}\cdot u_j \ra_B ) \right]_{i, j=1}^n  & \ldots 
& \left[ \psi(\la u_i \, , \, a_{1m}\cdot u_j \ra_B ) \right]_{i, j=1}^n \\
\vdots & \ddots & \vdots \\
\left[ \psi(\la u_i \, , \, a_{m1}\cdot u_j \ra_B ) \right]_{i, j=1}^n & \ldots
& \left[ \psi(\la u_i \, , \, a_{m \, m}\cdot u_j \ra_B ) \right]_{i, j=1}^n
\end{bmatrix}_{k, l=1}^m \\
& =\left[ \left[ \psi(\la u_i \, , \, a_{kl}\cdot u_j \ra_B ) \right]_{i, j=1}^n \right]_{k, l=1}^m .
\end{align*}
Since $\psi$ is a completely positive linear map from $B$ to $\BB(\CalK)$,
we have only to show that the element
$$
\left[ \left[ \la u_i \, , \, a_{kl}\cdot u_j \ra_B \right]_{i, j=1}^n \right]_{k, l=1}^m 
$$
is positive in $B\otimes M_n (\BC)\otimes M_m(\BC)$. Let $y=\left[ u_1 , \dots , u_n \right]\in Y$.
Let $\left[ y_{kl}\right]_{k. l=1}^m $ be an element in $M_m (Y)$ defined by
$$
y_{kl} = \begin{cases} y & \text{if $k=l$} \\
0 & \text{if $k\ne l$} \end{cases} .
$$
Then
$$
\la \left[ y_{kl}\right]_{k, l=1}^m \, , \, \left[ a_{kl}\right]_{k, l=1}^m \cdot \left[ y_{kl}\right]_{k, l=1}^m
\ra_{M_m(D)} \geq 0
$$
since $\left[ a_{kl}\right]_{k, l=1}^m \geq 0$. On the other hand, by the definition of $\left[ y_{kl}\right]_{k, l=1}^m$
and $y$,
\begin{align*}
\la \left[ y_{kl}\right]_{k, l=1}^m \, , \, \left[ a_{kl}\right]_{k, l=1}^m & \cdot \left[ y_{kl}\right]_{k, l=1}^m
\ra_{M_m(D)} =\la \left[ y_{kl}\right]_{k, l=1}^m \, , \, \left[ \sum_{t=1}^m a_{kt}\cdot y_{tl}\right]_{k, l=1}^m
\ra_{M_m(D)} \\
& =\la \left[ y_{kl}\right]_{k, l=1}^m \, , \, \left[ a_{kl}\cdot y \right]_{k, l=1}^m
\ra_{M_m(D)} \\
& =\left[ \sum_{t=1}^m \la y_{tk} \, , \, a_{tl}\cdot y \ra_D \right]_{k, l=1}^m \\
& =\left[ \la y \, , \, a_{kl}\cdot y \ra_D \right]_{k, l=1}^m \\
& =\left[ \la \left[u_1 , \dots , u_n \right] \, , \, 
\left[a_{kl}\cdot u_1 , \dots , a_{kl}\cdot u_n \right] \ra_{M_n (B)}\right]_{k. l=1}^m \\
& =\left[ \left[ \la u_i \, , \, a_{kl}\cdot u_j \ra_B  \right]_{i, j=1}^n \right]_{k, l=1}^m .
\end{align*}
Therefore, we obtain the conclusion.
\end{proof}

Let $(\pi_{\phi}, V_{\phi}, \CalH_{\phi})$ be a minimal Stinespring representation for $\phi$.
Let $A\odot (\CalK\otimes\BC^n )$ be the algebraic tensor product of $A$ and $\CalK\otimes\BC^n$.
Let $\{b_p \}_{p\in P}$ be an approximate unit of $B$ with $b_p \geq 0$ and $||b_p ||\leq 1$ for any
$p\in P$.
We define a map $U$ from $A\odot (\CalK\otimes\BC^n )$ to $\CalH_A$ by setting
$$
U(a\otimes\xi\otimes\lambda)=\lim_p \sum_{i=1}^n \lambda_i (a\cdot u_i )\otimes b_p \otimes\xi
$$
for any $a\in A$, $\xi\in \CalK$, $\lambda\in\BC^n$ and $\lambda=\left[ \begin{array}{c}
\lambda_1 \\
\vdots \\
\lambda_n \end{array}\right]$,
and extending linearly, where we identify $\BB(\CalK)\otimes M_n (\BC)$ with
$\BB(\CalK\otimes\BC^n )$ and the limit is taken under the weak topology of $\CalH_{A}$.

\begin{lemma}\label{lem:Co2} With the above notation, $U$ is an isometry from
$A\odot(\CalK\otimes\BC^n )$ to $\CalH_A $. Hence we can extend $U$ to an isometry
from $\CalH_{\phi}$ to $\CalH_A$.
\end{lemma}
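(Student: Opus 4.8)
The plan is to prove the single identity that $U$ preserves the relevant (semi-)inner products; the isometry statement and the extension to $\CalH_\phi$ then both follow immediately. Recall that the minimal Stinespring space $\CalH_\phi$ is the completion of $A\odot(\CalK\otimes\BC^n)$ modulo the null space $\CallN_\phi$ of the semi-inner product
$$
\la a\otimes v \, , \, a'\otimes v'\ra=\la v \, , \, \phi(a^* a')v'\ra_{\CalK\otimes\BC^n},
$$
and that by the definition of $\phi$ the $(i,j)$-entry of $\phi(a^* a')$ is $\psi(\la u_i \, , \, a^* a'\cdot u_j\ra_B)$. Hence for $v=\xi\otimes\lambda$ and $v'=\xi'\otimes\lambda'$ this semi-inner product equals $\sum_{i,j=1}^n\overline{\lambda_i}\lambda_j'\,\la\xi \, , \, \psi(\la u_i \, , \, a^* a'\cdot u_j\ra_B)\xi'\ra_{\CalK}$, and this is the quantity I must reproduce on the $\CalH_A$ side.

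First I would put $U$ in closed form. For each $x\in X$ the map $\eta\mapsto x\otimes\eta$ is a bounded, hence weakly continuous, map $\CalK_\psi\to\CalH_A$, and $V_\psi\xi=\lim_p(b_p\otimes\xi)$ weakly in $\CalK_\psi$; therefore the weak limit defining $U$ exists and
$$
U(a\otimes\xi\otimes\lambda)=\sum_{i=1}^n\lambda_i\,(a\cdot u_i)\otimes V_\psi\xi.
$$
This establishes that $U$ is well defined and lets me drop the approximate unit entirely. Using the inner product of $\CalH_A=X\otimes_B\CalK_\psi$, the adjointability relation $\la a\cdot u_i \, , \, a'\cdot u_j\ra_B=\la u_i \, , \, a^* a'\cdot u_j\ra_B$, and the Stinespring identity $V_\psi^*\pi_\psi(b)V_\psi=\psi(b)$, I then compute
$$
\la U(a\otimes\xi\otimes\lambda) \, , \, U(a'\otimes\xi'\otimes\lambda')\ra_{\CalH_A}=\sum_{i,j=1}^n\overline{\lambda_i}\lambda_j'\,\la V_\psi\xi \, , \, \pi_\psi(\la a\cdot u_i \, , \, a'\cdot u_j\ra_B)V_\psi\xi'\ra_{\CalK_\psi},
$$
which collapses to $\sum_{i,j}\overline{\lambda_i}\lambda_j'\,\la\xi \, , \, \psi(\la u_i \, , \, a^* a'\cdot u_j\ra_B)\xi'\ra_{\CalK}$. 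This is exactly the expression found in the first step, so $U$ preserves the semi-inner products.

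Since $U$ preserves these semi-inner products, $\|U(w)\|_{\CalH_A}$ equals the $\phi$-seminorm of $w$ for every $w\in A\odot(\CalK\otimes\BC^n)$; in particular $U$ annihilates $\CallN_\phi$ and so descends to a well-defined inner-product-preserving map on the dense subspace $(A\odot(\CalK\otimes\BC^n))/\CallN_\phi$ of $\CalH_\phi$, which extends by continuity to the desired isometry $\CalH_\phi\to\CalH_A$. I expect the only genuine subtlety to be the justification of the closed form for $U$, namely interchanging the tensor map with the weak limit defining $V_\psi$; once that is in place the argument is a single application of the Stinespring identity together with the bimodule relation, and the double-approximate-unit bookkeeping used in Lemma \ref{lem:Rel8} and Lemma \ref{lem:Rel9} is entirely bypassed.
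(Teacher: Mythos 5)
Your proof is correct and follows essentially the same route as the paper: both arguments reduce to checking that the semi-inner product of elementary tensors on each side equals $\sum_{i,j}\overline{\lambda_i}\mu_j\la \xi , \psi(\la u_i , a^*b\cdot u_j\ra_B)\eta\ra_{\CalK}$, using $V_\psi^*\pi_\psi(\cdot)V_\psi=\psi$ and the bimodule relation $\la a\cdot u_i , b\cdot u_j\ra_B=\la u_i , a^*b\cdot u_j\ra_B$. The only difference is organizational: the paper carries the double approximate-unit limit $\lim_{p,q}\la b_p\otimes\xi , \pi_\psi(\cdot)(b_q\otimes\eta)\ra$ through the computation, whereas you first collapse $U$ to the closed form $\sum_i\lambda_i(a\cdot u_i)\otimes V_\psi\xi$ via weak continuity of $\eta\mapsto x\otimes\eta$, which is a legitimate and slightly cleaner bookkeeping of the same step.
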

\begin{proof} Let $a, b\in A$, $\xi, \eta\in\CalK$, $\lambda=\left[ \begin{array}{c}
\lambda_1 \\
\vdots \\
\lambda_n \end{array}\right] \, , \, \mu=\left[ \begin{array}{c}
\mu_1 \\
\vdots \\
\mu_n \end{array}\right]\in \BC^n$. Then
\begin{align*}
& \la U(a\otimes\xi\otimes\lambda) \, , \, U(b\otimes\xi\otimes\mu) \ra_{\CalH_A} \\
& =\la \lim_p \sum_{i=1}^n \lambda_i (a\cdot u_i )\otimes b_p \otimes\xi \quad , \quad
\lim_q \sum_{j=1}^n \mu_j (b\cdot u_j )\otimes b_q \otimes\eta \ra_{\CalH_A} \\
& =\lim_{p, q}\sum_{i, j=1}^n \la b_p \otimes\xi \, , \, \pi_{\psi}(\la \lambda_i (a\cdot u_i ) \, , \,
\mu_j (b\cdot u_j )\, \ra_B \, )(b_q \otimes\eta) \, \ra_{\CalK_{\psi}} \\
& =\sum_{i, j=1}^n \la V_{\psi}\xi \, , \, \pi_{\psi}(\la \lambda_i (a\cdot u_i ) \, , \,
\mu_j (b\cdot u_j )\, \ra_B \, )V_{\psi}\eta \, \ra_{\CalK_{\psi}} \\
& =\sum_{i, j=1}^n \la \xi \, , \, V_{\psi}^* \pi_{\psi}(\la \lambda_i (a\cdot u_i ) \, , \,
\mu_j (b\cdot u_j )\, \ra_B \, )V_{\psi}\eta \, \ra_{\CalK} \\
& =\sum_{i, j=1}^n \la \xi \, , \, \psi(\la \lambda_i (a\cdot u_i ) \, , \,
\mu_j (b\cdot u_j )\, \ra_B \, )\eta \, \ra_{\CalK} .
\end{align*}
On the other hand,
\begin{align*}
\la a\otimes\xi\otimes\lambda \, , \, b\otimes\eta\otimes\mu \ra_{\CalH_{\phi}}
& =\la \left[ \begin{array}{c}
\lambda_1 \xi\\
\vdots \\
\lambda_n \xi \end{array}\right]  \, , \,
\left[\phi(a^* b)_{ij}\right]_{i, j=1}^n
\left[ \begin{array}{c}
\mu_1 \eta\\
\vdots \\
\mu_n \eta \end{array}\right] \, \ra_{\CalK\otimes\BC^n } \\
& =\la \left[ \begin{array}{c}
\lambda_1 \xi\\
\vdots \\
\lambda_n \xi \end{array}\right]  \, , \,
\left[ \begin{array}{c}
\sum_{j=1}^n \phi(a^* b)_{1j}\mu_j \eta \\
\vdots \\
\sum_{j=1}^n \phi(a^* b)_{nj}\mu_j \eta \end{array}\right] \, \ra_{\CalK\otimes\BC^n } \\
& =\sum_{i, j=1}^n \la \lambda_i \xi \, , \, \phi(a^* b)_{ij}\mu_j \eta \, \ra_{\CalK} \\
& =\sum_{i, j=1}^n \la \lambda_i \xi \, , \, \psi(\la u_i \, , \, a^* b \cdot u_j \ra_B )\mu_j \eta \, \ra_{\CalK} \\
& =\sum_{i, j=1}^n \la \xi \, , \, \psi(\la \lambda_i (a\cdot u_i ) \, , \, \mu_j (b\cdot u_j ) \ra_B )\, \eta \, \ra_{\CalK} .
\end{align*}
Hence $U$ preserves the inner products on the algebraic tensor products. We can extend $U$ to an isometry
from $\CalH_{\phi}$ to $\CalH_A$.
\end{proof}

We denote by the same symbol the above isometry from $\CalH_{\phi}$ to $\CalH_A$.
From now on, we assume that $A$ and $B$ are strongly Morita equivalent unital $C^*$-algebras.
Since $X$ is an $A-B$-equivalence bimodule, by Kajiwara and Watatani \cite [Corollary 1.19]{KW1:bimodule},
there is a left $A$-basis in $X$, which is a finite subset of $X$. We will show that $\phi$ is strongly Morita
equivalent to $\psi$ if $\{u_i \}_{i=1}^n $ is a left $A$-basis in $X$.

\begin{lemma}\label{lem:Co3} With the above notation, we assume that $\{u_i \}_{i=1}^n $ is a left $A$-basis
in $X$. Then $U$ is surjective.
\end{lemma}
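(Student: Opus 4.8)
The plan is to prove surjectivity through a density argument. Since $U$ is an isometry (Lemma~\ref{lem:Co2}) and $\CalH_{\phi}$ is complete, the image $U(\CalH_{\phi})$ is a closed subspace of $\CalH_A$; hence it suffices to show that $U(\CalH_{\phi})$ is dense. First I would record the effect of $U$ on the elementary tensors. Writing $\delta_i$ for the $i$-th standard basis vector of $\BC^n$ and using $V_{\psi}\xi=\lim_p b_p\otimes\xi$ together with the boundedness (hence weak continuity) of the map $\zeta\mapsto(a\cdot u_i)\otimes\zeta$ from $\CalK_{\psi}$ into $\CalH_A$, the defining weak limit collapses to
$$
U(a\otimes\xi\otimes\delta_i)=(a\cdot u_i)\otimes V_{\psi}\xi
$$
for all $a\in A$, $\xi\in\CalK$. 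In particular the image of $U$ contains every vector of the form $(a\cdot u_i)\otimes V_{\psi}\xi$.

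Next I would isolate a convenient dense subset of $\CalH_A=X\otimes_B\CalK_{\psi}$. Since $(\pi_{\psi},V_{\psi},\CalK_{\psi})$ is a \emph{minimal} Stinespring representation we have $\CalK_{\psi}=\overline{\pi_{\psi}(B)V_{\psi}\CalK}$, so any simple tensor $x\otimes\zeta$ is approximated by finite sums $\sum_k x\otimes\pi_{\psi}(b_k)V_{\psi}\xi_k$, and the balancing relation over $B$ turns each summand into $(x\cdot b_k)\otimes V_{\psi}\xi_k$. Consequently the linear span of $\{\,x\otimes V_{\psi}\xi\mid x\in X,\ \xi\in\CalK\,\}$ is dense in $\CalH_A$.

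The key step is the left $A$-basis hypothesis. For a left $A$-basis $\{u_i\}_{i=1}^n$ (in the sense of Kajiwara--Watatani) one has the finite reconstruction formula $x=\sum_{i=1}^n{}_A\la x,u_i\ra\cdot u_i$ for every $x\in X$; in particular every $x$ is a finite left $A$-linear combination $\sum_{i=1}^n a_i\cdot u_i$ of the basis vectors. Setting $a_i={}_A\la x,u_i\ra\in A$ therefore gives
$$
x\otimes V_{\psi}\xi=\sum_{i=1}^n(a_i\cdot u_i)\otimes V_{\psi}\xi=U\Bigl(\sum_{i=1}^n a_i\otimes\xi\otimes\delta_i\Bigr),
$$
so each generator $x\otimes V_{\psi}\xi$ already lies in $U(\CalH_{\phi})$. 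Combined with the density established in the previous paragraph, $U(\CalH_{\phi})$ is dense, hence equals the closed subspace $U(\CalH_{\phi})$ itself, which is all of $\CalH_A$; thus $U$ is surjective.

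I do not expect a genuine obstacle. The only points demanding care are interchanging the weak limit defining $U$ with the tensor map (handled by boundedness) and invoking the correct reconstruction formula for a finite left basis. The conceptual heart is simply that a left $A$-basis writes every $x$ as a finite $A$-combination of the $u_i$, and vectors of the form $(a\cdot u_i)\otimes V_{\psi}\xi$ are precisely what the image of $U$ is seen to contain.
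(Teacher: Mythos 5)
Your proof is correct and follows essentially the same route as the paper: compute $U$ on elementary tensors to get $(a\cdot u_i)\otimes V_{\psi}\xi$, use the reconstruction formula $x=\sum_{i=1}^n{}_A\la x,u_i\ra\cdot u_i$ for the left $A$-basis to show the image contains a dense subset of $\CalH_A$, and conclude from the fact that an isometry has closed range. The only cosmetic difference is that you invoke minimality of the Stinespring representation (and the balancing relation) where the paper cites $\overline{X\cdot B}=X$; both serve the same purpose of reducing general vectors of $X\otimes_B\CalK_{\psi}$ to the span of vectors of the form $x\otimes V_{\psi}\xi$.
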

\begin{proof} Since $\{u_i \}_{i=1}^n $ is a left $A$-basis in $X$,
for any $x\in X$, $x=\sum_{i=1}^n {}_A \la x, u_i \ra \cdot u_i$. Thus the set
$\{\sum_{i=1}^n a\cdot u_i \, | \, a\in A \}$ is equal to $X$. Also, by \cite [Proposition 1.7]{BMS:quasi},
$\overline{X\cdot B}=X$. Hence we can see that the set
$$
\{\sum_{i=1}^n (a\cdot u_i )\otimes 1_B \otimes \xi \, | \, a\in A \, , \, \xi\in \CalK \}
$$
is dense in $\CalH_A$. Therefore, $U$ is surjective.
\end{proof}

\begin{lemma}\label{lem:Co4} With the above notation, we assume that $\{u_i \}_{i=1}^n $ is a left $A$-basis
in $X$. Then
$$
\pi_{\phi}(c)=U^* \pi_A (c)U
$$
for any $c\in A$.
\end{lemma}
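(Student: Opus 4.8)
The plan is to exploit that $U$ is a surjective isometry, hence a unitary, and then to verify the intertwining relation $U\pi_{\phi}(c)=\pi_A (c)U$ on a dense subspace. By Lemma \ref{lem:Co2}, $U$ is an isometry from $\CalH_{\phi}$ to $\CalH_A$, and by Lemma \ref{lem:Co3}, the hypothesis that $\{u_i \}_{i=1}^n$ is a left $A$-basis makes $U$ surjective; therefore $U^* U=\id_{\CalH_{\phi}}$ and $UU^* =\id_{\CalH_A}$. Consequently it suffices to prove $U\pi_{\phi}(c)=\pi_A (c)U$ for all $c\in A$, since multiplying this identity on the left by $U^*$ and using $U^* U=\id_{\CalH_{\phi}}$ yields exactly $\pi_{\phi}(c)=U^* \pi_A (c)U$.

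Next I would recall that in the minimal Stinespring representation $(\pi_{\phi}, V_{\phi}, \CalH_{\phi})$ constructed from the algebraic tensor product $A\odot(\CalK\otimes\BC^n )$ as in \cite[Theorem 4.1]{Paulsen:CB-maps}, the representation $\pi_{\phi}$ acts by left multiplication on the first leg, namely
$$
\pi_{\phi}(c)(a\otimes\xi\otimes\lambda)=(ca)\otimes\xi\otimes\lambda
$$
for any $c, a\in A$, $\xi\in\CalK$, $\lambda\in\BC^n$. Since the classes $a\otimes\xi\otimes\lambda$ span a dense subspace of $\CalH_{\phi}$ and both $\pi_{\phi}(c)$ and $U$ are bounded, it is enough to check the intertwining identity on such elements.

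The computation then runs as follows. On one side, the definition of $U$ in Lemma \ref{lem:Co2} gives directly
$$
U\pi_{\phi}(c)(a\otimes\xi\otimes\lambda)=U\bigl((ca)\otimes\xi\otimes\lambda\bigr)=\lim_p \sum_{i=1}^n \lambda_i (ca\cdot u_i )\otimes b_p \otimes\xi .
$$
On the other side, because $\pi_A (c)\in\BB(\CalH_A )$ is weakly continuous and the limit defining $U$ is taken in the weak topology of $\CalH_A$, I may pull $\pi_A (c)$ through the limit and the finite sum, and then apply $\pi_A (c)(x\otimes\zeta)=(c\cdot x)\otimes\zeta$ with $x=a\cdot u_i$ and $\zeta=b_p\otimes\xi$, obtaining
$$
\pi_A (c)U(a\otimes\xi\otimes\lambda)=\lim_p \sum_{i=1}^n \lambda_i \pi_A (c)\bigl((a\cdot u_i )\otimes b_p \otimes\xi\bigr)=\lim_p \sum_{i=1}^n \lambda_i (ca\cdot u_i )\otimes b_p \otimes\xi .
$$
The two expressions coincide, so $U\pi_{\phi}(c)=\pi_A (c)U$ on the dense subspace and hence on all of $\CalH_{\phi}$ by continuity, which gives the claim.

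The step I expect to require the most care is the justification for interchanging the bounded operator $\pi_A (c)$ with the weak limit and the finite linear combination that define $U$; this is precisely where the construction of $U$ via weak limits from Lemmas \ref{lem:Co2} and \ref{lem:Co3} must be invoked, though it ultimately reduces to the weak--weak continuity of a bounded linear operator. Everything else is a direct, bookkeeping-level comparison of the two sides.
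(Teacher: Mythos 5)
Your proposal is correct and follows essentially the same route as the paper: both arguments reduce to a direct computation on the dense span of elementary tensors $a\otimes\xi\otimes\lambda$, using the explicit form of $U$, the left-multiplication action of $\pi_{\phi}$ from the concrete Stinespring construction, and the weak-weak continuity of bounded operators. The only cosmetic difference is that you verify the vector identity $U\pi_{\phi}(c)=\pi_A (c)U$ and then multiply by $U^*$, whereas the paper checks equality of the inner products $\la U^* \pi_A (c)U(\cdot)\, , \, \cdot\ra_{\CalH_{\phi}}$ and $\la \pi_{\phi}(c)(\cdot)\, , \, \cdot\ra_{\CalH_{\phi}}$ against a second elementary tensor.
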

\begin{proof} Let $a, b, c\in A$, $\xi, \eta\in\CalK$, $\lambda=\left[ \begin{array}{c}
\lambda_1 \\
\vdots \\
\lambda_n \end{array}\right] \, , \, \mu=\left[ \begin{array}{c}
\mu_1 \\
\vdots \\
\mu_n \end{array}\right]\in \BC^n$. Then
\begin{align*}
& \la U^* \pi_A (c)U(a\otimes\xi\otimes\lambda) \, , \, b\otimes\eta\otimes\mu \ra_{\CalH_{\phi}} \\
& =\la \pi_A (c)\sum_{i=1}^n \lambda_i (a\cdot u_i )\otimes 1_B \otimes\xi \, , \,
\sum_{j=1}^n \mu_j (b\cdot u_j )\otimes 1_B \otimes\eta\, \ra_{\CalH_A} \\
& =\sum_{i, j=1}^n \la \lambda_i (ca\cdot u_i )\otimes 1_B \otimes\xi \, , \,
\mu_j (b\cdot u_j )\otimes 1_B \otimes\eta\, \ra_{\CalH_A} \\
& =\sum_{i, j=1}^n \la 1_B \otimes\xi \, , \, \pi_{\psi}(\la \lambda_i (ca\cdot u_i ) \, , \, 
\mu_j (b\cdot u_j ) \ra_B )(1_B \otimes\eta)\,\ra_{\CalK_{\psi}} \\
& =\sum_{i, j=1}^n \la V_{\psi}\xi \, , \, \pi_{\psi}(\la \lambda_i (ca\cdot u_i ) \, , \, 
\mu_j (b\cdot u_j ) \ra_B )V_{\psi}\eta \ra_{\CalK_{\psi}} \\
& =\sum_{i, j=1}^n \la \xi \, , \, \psi(\la \lambda_i (ca\cdot u_i ) \, , \, 
\mu_j (b\cdot u_j ) \ra_B )\eta \ra_{\CalK} .
\end{align*}
On the othere hand,
\begin{align*}
& \la \pi_{\phi}(c)(a\otimes\xi\otimes\lambda) \, , \, b\otimes\eta\otimes\mu \ra_{\CalH_{\phi}} \\
& =\la \xi \otimes\lambda \, , \, \phi(a^* c^* b)(\eta\otimes\mu) \ra_{\CalK\otimes\BC^n } \\
& =\la \left[ \begin{array}{c}
\lambda_1 \xi \\
\vdots \\
\lambda_n \xi \end{array}\right] \, , \, 
\left[\psi(\la u_i \, , \, a^* c^* b\cdot u_j \ra_B )\right]_{i, j=1}^n
\left[ \begin{array}{c}
\mu_1 \eta \\
\vdots \\
\mu_n \eta \end{array}\right] \ra_{\CalK\otimes\BC^n} \\
& =\la \left[ \begin{array}{c}
\lambda_1 \xi \\
\vdots \\
\lambda_n \xi \end{array}\right] \, , \, 
\left[ \begin{array}{c}
\sum_{j=1}^n \psi(\la u_1 \, , \, a^* c^* b\cdot u_j  \, \ra_B )\mu_j \eta \\
\vdots \\
\sum_{j=1}^n \psi(\la u_n \, , \, a^* c^* b\cdot u_j  \, \ra_B )\mu_j \eta  \end{array}\right] \ra_{\CalK\otimes\BC^n} \\
& =\sum_{i, j=1}^n \la \xi \, , \, \psi(\la \lambda_i u_i \, , \, a^* c^* b\cdot u_j \ra_B )\mu_j \eta \ra_{\CalK} .
\end{align*}
Thus we obtain the conclusion.
\end{proof}

We give the main theorem in the paper.

\begin{thm}\label{thm:Co5} Let $A$ and $B$ be unital $C^*$-algebras, which are strongly Morita
equivalent with respect to an $A-B$-equivalence bimodule $X$. Let $\{u_i \}_{i=1}^n$ be a left
$A$-basis in $X$. Let $\psi$ be a completely positive linear map from $B$ to $\BB(\CalK)$,
where $\CalK$ is a Hilbert space. Let $\phi$ be a map from $A$ to $\BB(\CalK\otimes\BC^n )$
defined by
$$
\left[ \phi(a)_{ij}\right]_{i, j=1}^n =\left[ \psi(\la u_i \, , \, a\cdot u_j \ra_B )\right]_{i, j=1}^n
$$
for any $a\in A$. Then $\phi$ is a completely positive linear map from $A$ to $\BB(\CalK\otimes\BC^n )$,
which is strongly Morita equivalent to $\psi$.
\end{thm}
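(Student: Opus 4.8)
The complete positivity of $\phi$ is already established in Lemma~\ref{lem:Co1}, so the plan is to concentrate entirely on proving that $\phi$ and $\psi$ are strongly Morita equivalent as completely positive linear maps. The relevant objects are the three non-degenerate representations already in play: the minimal Stinespring representation $(\pi_{\psi}, V_{\psi}, \CalK_{\psi})$ for $\psi$; the representation $(\pi_A, \CalH_A)$ of $A$ induced by $X$ and $(\pi_{\psi}, V_{\psi}, \CalK_{\psi})$, where $\CalH_A = X\otimes_B \CalK_{\psi}$; and a minimal Stinespring representation $(\pi_{\phi}, V_{\phi}, \CalH_{\phi})$ for $\phi$. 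By Definition~\ref{Def:DP3}, it suffices to show that $(\pi_{\phi}, \CalH_{\phi})$ is strongly Morita equivalent, as a non-degenerate representation, to $(\pi_{\psi}, \CalK_{\psi})$.

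The first step is to identify $(\pi_{\phi}, \CalH_{\phi})$ with $(\pi_A, \CalH_A)$ up to unitary equivalence using the isometry $U : \CalH_{\phi}\to\CalH_A$ from Lemma~\ref{lem:Co2}. Since $\{u_i\}_{i=1}^n$ is a left $A$-basis in $X$, Lemma~\ref{lem:Co3} shows that $U$ is surjective, hence a unitary from $\CalH_{\phi}$ onto $\CalH_A$, and Lemma~\ref{lem:Co4} gives $\pi_{\phi}(c) = U^* \pi_A(c) U$ for all $c\in A$, equivalently $\pi_A = \Ad(U)\circ\pi_{\phi}$. Thus $(\pi_{\phi}, \CalH_{\phi})$ and $(\pi_A, \CalH_A)$ are unitarily equivalent, and by Lemma~\ref{lem:MR3-2} they are strongly Morita equivalent.

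The second step records that the induced representation $(\pi_A, \CalH_A)$ is, by its very construction in Section~\ref{sec:MR} (following \cite[Lemma 2.2]{ER:multiplier}), strongly Morita equivalent to $(\pi_{\psi}, \CalK_{\psi})$: the triple $(\pi_A, \pi_X, \pi_{\psi})$ with $\pi_X(x)\xi = x\otimes\xi$ is a representation of $X$ on the pair of Hilbert spaces $(\CalH_A, \CalK_{\psi})$. Combining the two steps through the transitivity of strong Morita equivalence (Lemma~\ref{lem:MR3}), $(\pi_{\phi}, \CalH_{\phi})$ is strongly Morita equivalent to $(\pi_{\psi}, \CalK_{\psi})$. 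Since these are minimal Stinespring representations for $\phi$ and $\psi$ respectively, Definition~\ref{Def:DP3} yields that $\phi$ and $\psi$ are strongly Morita equivalent, completing the argument.

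I do not expect any genuine obstacle at the level of the theorem statement itself: all the real work has been front-loaded into Lemmas~\ref{lem:Co2}--\ref{lem:Co4}, which establish that $U$ is a surjective intertwining isometry, and the theorem is then a short assembly whose only content is chaining a unitary equivalence with the induced-representation equivalence via transitivity. The one point requiring care is making sure the representation of $B$ used to build $\CalH_A = X\otimes_B \CalK_{\psi}$ is precisely the Stinespring representation $(\pi_{\psi}, \CalK_{\psi})$, so that the induced equivalence lands exactly on the minimal Stinespring data for $\psi$ rather than on some auxiliary representation.
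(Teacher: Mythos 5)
Your proposal is correct and follows essentially the same route as the paper: the paper's own proof simply cites Lemma \ref{lem:Co1} for complete positivity and Lemmas \ref{lem:Co2}--\ref{lem:Co4} for the strong Morita equivalence, and your assembly (unitary equivalence of $(\pi_{\phi},\CalH_{\phi})$ with $(\pi_A,\CalH_A)$ via $U$, then the induced-representation equivalence with $(\pi_{\psi},\CalK_{\psi})$, chained by Lemmas \ref{lem:MR3-2} and \ref{lem:MR3}) is exactly the intended argument spelled out in more detail.
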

\begin{proof}
By Lemma \ref{lem:Co1}, we can see that $\phi$ is a completely positive linear map from $A$
to $\BB(\CalK\otimes\BC^n )$. Also, by Lemmas \ref{lem:Co2}, \ref{lem:Co3} and \ref{lem:Co4} and the
definition of strong Morita equivalence for completely positive maps, we can see that $\phi$ is strongly
Morita equivalent to $\psi$.
\end{proof}

\begin{cor}\label{cor:Co6} Let $A$ and $B$ be unital $C^*$-algebras, which are strongly Morita
equivalent. Then there is a $1-1$ correspondence between the set of all strong Morita equivalence
classes of completely positive linear maps on $A$ and the set of all strong Morita equivalence
classes of completely positive linear maps on $B$.
\end{cor}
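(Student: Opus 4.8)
The plan is to read the bijection off Theorem~\ref{thm:Co5}, using the fact recorded in Proposition~\ref{prop:DP4} that strong Morita equivalence for completely positive linear maps is an equivalence relation which may relate maps defined on \emph{different} $C^*$-algebras. Write $S_A$ for the set of strong Morita equivalence classes of completely positive linear maps on $A$ and $S_B$ for the analogous set for $B$; for a completely positive linear map $\phi$ on $A$ (resp. $\psi$ on $B$) let $[\phi]_A$ (resp. $[\psi]_B$) denote its class. Since $A$ and $B$ are unital and strongly Morita equivalent, fix an $A-B$-equivalence bimodule $X$; by Kajiwara and Watatani \cite[Corollary 1.19]{KW1:bimodule} there is a finite left $A$-basis $\{u_i\}_{i=1}^n$ in $X$.

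First I would define a map $F\colon S_B \to S_A$. Given a completely positive linear map $\psi$ from $B$ to some $\BB(\CalK)$, Theorem~\ref{thm:Co5} produces a completely positive linear map $\phi$ from $A$ to $\BB(\CalK\otimes\BC^n)$ which is strongly Morita equivalent to $\psi$, and I set $F([\psi]_B)=[\phi]_A$. To see that $F$ is well defined I would argue as follows: if $\psi_1$ and $\psi_2$ are completely positive linear maps on $B$ with $[\psi_1]_B=[\psi_2]_B$, and $\phi_1,\phi_2$ are the maps on $A$ produced by Theorem~\ref{thm:Co5}, then $\phi_1$ is strongly Morita equivalent to $\psi_1$, which is strongly Morita equivalent to $\psi_2$, which is strongly Morita equivalent to $\phi_2$; transitivity (Proposition~\ref{prop:DP4}) gives $[\phi_1]_A=[\phi_2]_A$. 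The same argument shows that $[\phi]_A$ does not depend on the choice of left $A$-basis used in Theorem~\ref{thm:Co5}, since any two resulting maps $\phi$ are both strongly Morita equivalent to the fixed $\psi$. Note that this construction automatically realizes the stronger assertion of the introduction: the classes $[\psi]_B$ and $F([\psi]_B)$ are represented by strongly Morita equivalent maps.

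Next I would construct the inverse. The dual module $\widetilde{X}$ is a $B-A$-equivalence bimodule, and since $B$ is unital it carries a finite left $B$-basis by \cite[Corollary 1.19]{KW1:bimodule}. Applying Theorem~\ref{thm:Co5} with the roles of $A$ and $B$ interchanged and $X$ replaced by $\widetilde{X}$, each completely positive linear map $\phi$ on $A$ yields a completely positive linear map $\psi'$ on $B$ strongly Morita equivalent to $\phi$, and I define $G([\phi]_A)=[\psi']_B$; well-definedness is checked exactly as for $F$. To finish I would verify that $G\circ F=\id$ and $F\circ G=\id$. Starting from $[\psi]_B$, the map $F$ gives $[\phi]_A$ with $\phi$ strongly Morita equivalent to $\psi$, and $G$ then gives $[\psi']_B$ with $\psi'$ strongly Morita equivalent to $\phi$; by transitivity $\psi'$ is strongly Morita equivalent to $\psi$, and since both are completely positive linear maps on $B$ this forces $[\psi']_B=[\psi]_B$. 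The composition $F\circ G$ is handled symmetrically.

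The step I expect to require the most care is the bookkeeping of well-definedness: one must keep clearly separated the relation ``strongly Morita equivalent'' (which relates maps on possibly different algebras) from equality of classes within $S_A$ or within $S_B$, and the entire argument rests on transitivity (Lemma~\ref{lem:MR3}, Proposition~\ref{prop:DP4}) together with the existence of finite one-sided bases in both $X$ and $\widetilde{X}$ — which is exactly where the unitality of $A$ and $B$ enters.
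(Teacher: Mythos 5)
Your argument is correct and is exactly the one the paper intends: the paper's own proof consists of the single sentence ``This is immediate by Theorem \ref{thm:Co5},'' and your proposal simply makes explicit the bookkeeping (the forward map via $X$, the inverse via $\widetilde{X}$, well-definedness and the two compositions all following from transitivity in Proposition \ref{prop:DP4}). No gaps; your version is just a fuller write-up of the same route.
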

\begin{proof} This is immediate by Theorem \ref{thm:Co5}.
\end{proof}

\end{document}